\newtheorem{theorem}{Theorem}
\newtheorem{lemma}[theorem]{Lemma}
\theoremstyle{definition}
\newtheorem{definition}[theorem]{Definition}
\newtheorem{remark}[theorem]{Remark}
 \newtheorem{proposition}[theorem]{Proposition}% 
 \theoremstyle{definition}
 \theoremstyle{remark}
 \numberwithin{equation}{section}
\def\R{{\mathbb{R}}}
\def\N{{\mathbb{N}}}
\newcommand{\ds}{\displaystyle}
\begin{document}

%-------------------------------------------------------------------------
% editorial commands: to be inserted by the editorial office
%
%\firstpage{1} \volume{228} \Copyrightyear{2004} \DOI{003-0001}
%
%
%\seriesextra{Just an add-on}
%\seriesextraline{This is the Concrete Title of this Book\br H.E. R and S.T.C. W, Eds.}
%
% for journals:
%
%\firstpage{1}
%\issuenumber{1}
%\Volumeandyear{1 (2004)}
%\Copyrightyear{2004}
%\DOI{003-xxxx-y}
%\Signet
%\commby{inhouse}
%\submitted{March 14, 2003}
%\received{March 16, 2000}
%\revised{June 1, 2000}
%\accepted{July 22, 2000}
%
%
%
%---------------------------------------------------------------------------
%Insert here the title, affiliations and abstract:
%

\title[Non--linear Degenerate Elliptic Equations]
 {The influence of the Hardy potential and a Convection Term on a  Non--linear Degenerate Elliptic Equations}

%----------Author 1
\author[Fessel Achhoud]{Fessel Achhoud}

\address{%
Faculté des Sciences et Techniques MISI Laboratory, Hassan
First University of Settat, B.P. 577 Settat, 26000, Morocco.}

\email{f.achhoud@uhp.ac.ma}

%\thanks{This work was completed with the support of our
%\TeX-pert.}
%----------Author 2
\author{Abdelkader Bouajaja}
\address{Hassan First University,
Faculty of Economics and Management,
B.P. 577, Settat, Morocco.}
\email{abdelkader.bouajaja@uhp.ac.ma}
%----------Author 3
\author{Hicham Redwane}
\address{Hassan First University,
Faculty of Economics and Management,
B.P. 577, Settat, Morocco.}
\email{hicham.redwane@uhp.ac.ma}
%----------classification, keywords, date
\subjclass{35J60, 35K05, 35K67, 35R09}

\keywords{Degenerate nonlinear elliptic equation, Hardy potential, renormalized solutions, convection term, $L^m -data$}

%\date{January 1, 2004}
%----------additions
%\dedicatory{To my boss}
%%% ----------------------------------------------------------------------

\begin{abstract}
This paper is devoted to prove existence of renormalized solutions for a class of non--linear degenerate elliptic equations involving a non--linear convection term, which satisfies a growth properties, and a Hardy
potential. Additionally, we assume that the right-hand side  is an $L^m$ function, with $m\geq 1$.
\end{abstract}

%%% ----------------------------------------------------------------------
\maketitle
%%% ----------------------------------------------------------------------
%\tableofcontents
\section{Introduction}\label{s1}
 Let $\mathcal{W}$ denote a bounded  open subset of $\R^N$($N\geq 3$) such that $0\in \mathcal{W}$.
 Consider the following model of  nonlinear elliptic problem with principal part having degenerate coercivity
\begin{equation}\label{prototype}
\begin{cases}-\ds\operatorname{div}\left(\frac{\vert\nabla v\vert^{p-2}\nabla v}{(1+\vert v\vert)^{\theta(p-1)}}+c_0(x)\vert v\vert^{\lambda-1}v\right)=\gamma \frac{\vert v\vert^{s-1}v}{\vert x\vert^{p}}+f & \text {in } \mathcal{W},  \\ u=0 & \text {on } \partial \mathcal{W},\end{cases}
\end{equation}
here $p\in (1,+\infty)$ and $\theta,\,\lambda,\,\gamma$ and $s$ are positive numbers, $c_0(x)\in L^{\frac{N}{p-1}}(\mathcal{W})$ and $f$ is in $L^m(\mathcal{W})$ with $m\geq 1.$ 
%%%%%%%%%%%%%%%%%%%%%%%%%%%%%%%%%%%theat=0&&&&&&&&&&&&
Let  us  assume  that  the operator  has  no  convection term and no Hardy potential,  i.e.  $\theta=c_0=\gamma=0$, in this 
case  the difficulties  in  studying  problem  \eqref{prototype} are  due  only to  the  right-hand  side $f$.
 We recall that in the classical case $\theta=0$, such kind of problems with convection  term  were studied well in the literature in a different frameworks for an exhaustive review
of this topic, we refer to \cite{17,D, 16, MP, M, 35, T, Z}. Moreover, we recall also the works \cite{20, 21, 32, 40, 36}  where the classic boundary value problems involving the Hardy potential.

Given $k > 0$ and $\forall n\in \N^*$, denote by $\mathcal{T}_{k}$ the truncation function at level $\pm k$ define as 
\begin{equation*}
\mathcal{T}_{k}(t)=\min \{k, \max \{-k, t\}\}, \quad \forall t\in\R.
\end{equation*}
 It is well known that  the framework of renormalized or entropy solution makes a sense according to the following definition of the weak gradient
\begin{definition}(See  \cite{38},  Lemma 2.1). \label{dff1}
If $v$ is a measurable function defined on $\mathcal{W}$ that is almost everywhere finite and satisfies $\mathcal{T}_{k}(v)\in W_0^{1,p}(\mathcal{W})$ for all $k>0$, then there exists a unique measurable function $w:\mathcal{W}\rightarrow \mathbb{R}^N$ such that 
\begin{equation*}
\nabla \mathcal{T}_{k}(v)=w\chi_{\{\vert u\vert\leq k\}}.
\end{equation*}
 Thus, we can define the generalized gradient $\nabla v$ of $v$ as this function $w$, and denote $\nabla v=w$.
\end{definition}
The same reasoning applies if we deal with the degenerate case, on condition that $u$ is finite almost everywhere in $\mathcal{W}$ and such that
\begin{equation}\label{grd}
\nabla \mathcal{T}_{k}(v)\in \left(L^{p}(\mathcal{W})\right)^N\quad \text{for every $k>0$}.
\end{equation}
% We recall that the notion of renormalized solutionss was introduced in [20] by DiPerna and Lions in their study
%of the Boltzmann equation. This notion was adapted to the study of some nonlinear elliptic problems
%with Dirichlet boundary conditions by Boccardo, Diaz, Giachetti, and Murat [15] and Lions and Murat
%(see Lions [28]).
%%%%%%%%%%%%%%%%%%%%%%%%%%%%%%%%%%%%%%%%%%%%%%%%

The degenerate case was firstly studied in \cite{B}. In this paper,  Boccardo and al have studied the existence and regularity for the following quasi-linear elliptic problem 
\begin{equation}\label{crv1}
\begin{aligned}
\begin{cases}-\ds\operatorname{div}(A(x, v) \nabla v)=f & \text { in } \mathcal{W}, \\ u=0 & \text { on } \partial \mathcal{W},\end{cases}
\end{aligned}
\end{equation}
here $f$ is assumed to be in $L^{m}(\mathcal{W})$ with $m \geq 1$,
 and $A(x, t): \mathcal{W} \times \mathbb{R} \rightarrow$ $\mathbb{R}$ is a  measurable function with respect to $x$ for every $t \in \mathbb{R}$, and continuous function with respect to $t$ for almost every $x \in \mathcal{W}$, satisfying the following condition: 
there exist $\theta \in [0,1],$  $\alpha,\,\beta\in (0,\infty)$  such that
\begin{equation*}
\frac{\alpha}{(1+\vert t\vert)^\theta} \leq A(x, t) \leq \beta,\quad \text{for a.e. $x\in \mathcal{W}$,  $\forall t\in \R$.}
\end{equation*}
Moreover, in the paper \cite{BG} the authors demonstrated the existence of a renormalized solutions for the problem \eqref{crv1} with datum  $f \in L^{1}(\mathcal{W})$ and $A(x, t): \mathcal{W} \times \mathbb{R} \rightarrow \mathbb{R}^{N \times N}$ is a Carathéodory function with values in the space of matrices on $\mathbb{R}$ and is not assumed to be symmetric. 
A result on the existence and regularity of weak and entropy solutions is obtained, by Alvino and al  in \cite{104}, for a nonlinear degenerate elliptic problem of the form
\begin{align*}
\begin{cases}-\ds\operatorname{div}\left(\frac{\vert\nabla v\vert^{p-2} \nabla v}{(1+\vert v\vert)^{\theta(p-1)}}\right)=f & \text { in } \mathcal{W}, \\ u=0 & \text { on } \partial \mathcal{W},\end{cases}
\end{align*}
where  $f$ is a measurable function in $L^{m}(\mathcal{W})$ with $m \geq 1$.

One of the main points that we stress in this paper  is to analyze the interaction between  the convection term and  the one singular at the origin, the so-called Hardy potential, to obtain the existence  of a renormalized solution for the problem \eqref{prototype}. The influence of Hardy potential in elliptic problems has been studied in several papers (see for example the book \cite{36} for a more general framework).
    
Actually, if $c_0(x)=0$ and $f$ is a nonnegative function in $L^m(\mathcal{W})$ with $m\geq 1,$ the authors  established, in \cite{22}, an existence and non-existence result of non-negative renormalized solutions for a nonlinear degenerate elliptic problem of the form
\begin{equation*}
\begin{aligned}
\begin{cases}
\ds-\operatorname{div}\left(\frac{ \vert \nabla v\vert^{p-2}\nabla v}{(d(x)+\vert v\vert)^{\theta(p-1)}}\right)=\gamma \frac{\vert v\vert^{s}}{\vert x\vert^p}+f & \text{in}\, \mathcal{W},\\
u\geq 0 & \text{in}\, \mathcal{W},\\
u=0 &\text{on} \, \partial \mathcal{W},
\end{cases}
\end{aligned}
\end{equation*}
where $\gamma$ and $s$ are positive numbers and 
%$a: \mathcal{W} \times \mathbb{R} \times \mathbb{R}^{N} \rightarrow \mathbb{R}^{N}$ is such that
%\begin{equation*}
%\ds a(x, t, \xi)\cdot \xi \geq \frac{\alpha \vert\xi\vert^{p}}{(d(x)+\vert t\vert)^{\theta(p-1)}} \quad \theta \in(0,1),
%\end{equation*}
%with 
$d: \mathcal{W}\rightarrow \left(0,+\infty\right)$ is a bounded measurable function. 
In addition, one of the most interesting phenomena that exhibit this problem if $s=(1-\theta)(p-1)$ and $f\in L^1(\mathcal{W})$ is the non-existence of  solutions.  This non-existence result can be illustrated by considering the following simplest problem, studied in \cite{32},
\begin{equation*}
\begin{aligned}
\begin{cases}
\ds-\Delta v=\gamma \frac{v}{\vert x\vert^2}+f & \text{in}\, \mathcal{W},\\
u=0 &\text{on} \, \partial \mathcal{W}.
\end{cases}
\end{aligned}
\end{equation*}
%\subsection{Effect of NCT}

%M.F. Betta, O. Guibé, and A. Mercaldo have studied, in \cite{23}, a class of  nonlinear elliptic Neumann problem whose
%prototype is
%\begin{equation}
%\begin{cases}
%-\operatorname{div}(\vert\nabla v\vert^{p-2}\nabla v + c_0(x)\vert u\vert^{p-2}u) = f & \text{in } \mathcal{W}, \\
%(\vert\nabla v\vert^{p-2}\nabla v + c_0(x)\vert u\vert^{p-2}u) \cdot \underline{n} = 0 & \text{on } \partial\mathcal{W},
%\end{cases}
%\end{equation}
%when $p\in (1,N]$, $\mathcal{W}$ is a bounded domain of $\R^N(N\geq 2)$, with Lipschitz boundary, $c_0(x)\in L^{\frac{N}{p-1}}(\mathcal{W}),$ $\underline{n}$ is the outer unit normal to $\partial\mathcal{W}$ and $f\in L^1(\mathcal{W})$ with $\ds\int_\mathcal{W} f(x)\;dx=0$.
%The authors studied the existence of renormalized solutions to the aforementioned problem. Additionally, their approach offers a methodology to establish the existence of renormalized solutionss for an operator that includes a zero-order term and to infer a stability result. Readers interested in delving into the specifics are encouraged to refer to the provided references for a more comprehensive grasp of the subject.

On the other hand, in the case where  $f$ is a Radon measure with bounded variation defined on $\mathcal{W}$,  T. Del Vecchio and M.R. Posteraro demonstrated, using the symmetrization method, the existence of weak solutions for a class of nonlinear and  noncoercive problem involving a lower order term, whose prototype is
\begin{equation}\label{GG}
\begin{cases}
- \operatorname{div}(\vert\nabla v\vert^{p-2}\nabla v + c_0(x)\vert v\vert^{\lambda}) + d(x)\vert\nabla v\vert^\mu = f& \text{in } \mathcal{W}, \\
u = 0 & \text{on } \partial \mathcal{W},
\end{cases}
\end{equation}
in this context, the functions $d(x)$ and $c_0(x)$ belong to $L^N(\mathcal{W})$ and $L^{\frac{N}{p-1}}(\mathcal{W})$ respectively. Moreover, in the case when $\gamma=\mu=p-1$ they supposed that $\Vert d(x)\Vert_{L^N(\mathcal{W})}$ or $\Vert c_0(x)\Vert_{L^{\frac{N}{p-1}}(\mathcal{W})}$ is small enough.  The most delicate case was to obtain a priori estimate for $v$ and $\nabla v$ in the case when $\left\Vert  c_0(x)\right\Vert _{L^{\frac{N}{p-1}}(\mathcal{W})}$ is not small.
Recently, in the paper \cite{16}, O. Guibé and A. Mercaldo studied the problem  \eqref{GG} in the general framework of  Lorentz spaces. The authors successfully demonstrated the existence of renormalized solutions under the conditions  $0 \leq \mu, \lambda \leq p-1$ and $\Vert c_0(x)\Vert_{L^{\frac{N}{p-1}}(\mathcal{W})}$ is not small. This is done
by proving the following uniform estimate 
\begin{equation}\label{Nn}
\ds \forall \eta >0, \exists \nu_\eta>0 \;\;\; meas\lbrace\vert v \vert> \nu_\eta \rbrace\leq \frac{1}{\eta^p},
\end{equation} 
 which allowed them to derive the estimate \eqref{grd} which implies an estimates on $\ds\vert v\vert^{p-1}$ and $\ds\vert\nabla v\vert^{p-1}$, thanks to the lemma \ref{lapes}, in some Lorentz-Marcinkiewicz space.

 However, in the case $\theta \neq 0$ and $\gamma \neq 0$, one can not prove \eqref{Nn} but instead one only has  
 \begin{equation*}
\ds \forall \eta >0, \exists \bar{\nu}_\eta>0 \;\;\; meas\lbrace\vert \widetilde{\varrho}(v) \vert> \bar{\nu}_\eta \rbrace\leq \frac{1}{\eta^p}.
\end{equation*} 
with  $\widetilde{\varrho}(t)$ denotes the primitive of a decreasing continuous function given by 
\begin{equation*}
\varrho(t)=\frac{1}{(1+\vert t \vert)^{\theta}}\quad \quad \theta \in [0,1),
\end{equation*}
 which satisfies the following behavior at $\infty$
% i.e. $\widetilde{\varrho}(t)=\ds\int_{0}^{t} \varrho(r)\;dr,$
\begin{equation}\label{jj}
\lim_{\vert t\vert\rightarrow + \infty} \widetilde{\varrho}(t)=\pm \infty.
\end{equation}
%We notice that
%\begin{equation*}
%\frac{\vert  t\vert ^{\lambda}}{\left(1+\vert \widetilde{\varrho}(t)\vert \right)^{p-1}}\approx \ds\vert t\vert ^{\lambda-(1-\theta )(p-1)}  \text{ as } \; \vert t\vert\rightarrow +\infty,
%\end{equation*}
%then, for any $\lambda<(1-\theta )(p-1)$, we have
% \begin{equation*}
% \ds\lim_{\vert t\vert\rightarrow +\infty}\frac{\vert  t\vert ^\lambda}{(1+\vert \widetilde{\varrho}(t)\vert )^{p-1}}=0,
% \end{equation*}
 Moreover, there exists a positive constant $\widetilde{C}>0$ and a positive real number $k_0$  such that for every $\vert t\vert>k_0$, one has 
 \begin{equation}\label{jjj}
 \ds\frac{\vert  t\vert ^\lambda}{(1+\vert \widetilde{\varrho}(t)\vert )^{p-1}}\leq \widetilde{C}.
 \end{equation}
 
 We stress that the method used in \cite{16}, is not apply directly in our case since the right hand side of our problem involving the Hardy potential.  In order to overcome this difficulty, we prove an $L^1-$estimate on Hardy potential term  by arguing as in \cite{22}.
This is enough, thanks to \eqref{jj} and \eqref{jjj}, to ensure that $u$ satisfies \eqref{Nn}.

Finally we explicitly remark that if we consider $0 < s, \lambda < (1-\theta)(p-1)$. Thanks to \eqref{jjj}, the following inequalities hold
\begin{align*}
\ds \int_\mathcal{W} \frac{\left\vert v\right\vert ^{s}}{\vert x\vert ^p}\;dx\leq \widetilde{C} \int_\mathcal{W} \frac{(1+\vert \widetilde{\varrho}(v)\vert)^{p-1}}{\vert x\vert^p}\;dx,
\end{align*}
and
\begin{align*}
\ds\int_\mathcal{W} c_0(x)\vert v\vert^\lambda \;dx\leq  \widetilde{C} \int_\mathcal{W} c_0(x)(1+\widetilde{\varrho}(v))^{p-1}\;dx.
\end{align*}
Therefore, the equation \eqref{prototype} may be equivalently written as
\begin{equation}\label{problem2}
\begin{cases}-\ds\operatorname{div}\left(\vert\nabla \widetilde{\varrho}(v)\vert^{p-2}\nabla \widetilde{\varrho}(v)+\widetilde {c}_0(x)\vert\widetilde{\varrho}(v)\vert^{p-1}\right)=\gamma \ds\frac{\vert \widetilde{\varrho}(v)\vert^{p-1}}{\vert x\vert^{p}}+g & \text {in } \mathcal{W},  \\ \widetilde{\varrho}(v)=0 & \text {on } \partial \mathcal{W},\end{cases}
\end{equation}
%\begin{equation}\label{problem2}
%\begin{cases}-\operatorname{div}(\tilde{a}(x, \tilde{\varrho}(u), \nabla \tilde{\varrho}(u))+\tilde{\mathcal{B}}(x,\tilde{\varrho}(u)))=\gamma \ds\frac{\vert \tilde{\varrho}(u)\vert^{p-1}}{\vert x\vert^{p}}+g(x) & \text { in } \mathcal{W},  \\ \tilde{\varrho}(u)=0 & \text { on } \partial \mathcal{W},\end{cases}
%\end{equation}
such that  $\widetilde{c}_0(x) \in L^{\frac{N}{p-1}}(\mathcal{W})$ and  the 
%$\tilde{a} : \mathcal{W}\times\R\times\R^N \rightarrow \R^N$  is a Carath\'eodory function which satisfies  assumptions:
%\begin{equation*}\label{bb3}
%\tilde{a}(x,\eta, \xi).\xi \geq \tilde{\alpha}\vert \xi\vert^p,
%\end{equation*}
%\begin{equation*}\label{bb4}
%\vert \tilde{a}(x,\eta,\xi)\vert  \leq \mathcal{C} (\tilde{G}(x)+\vert \eta\vert ^{p-1}+\vert \xi\vert ^{p-1}),
%\end{equation*}
%\begin{equation*}\label{bb5}
% [ \tilde{a}(x,\eta,\xi)-\tilde{a}(x,\eta,\xi')][\xi-\xi'] > 0,
%\end{equation*}
%for almost every $x \in \mathcal{W}$, for every $(\eta,\xi) \in  \R \times\mathbb{R}^N$, $\tilde{\alpha}$  and $\mathcal{C}$ are positive real number, and $\tilde{G}$ is a  nonnegative function in $ L^{p^\prime}(\mathcal{W}).$
%
%Regarding the non-linear convection term, the function $\tilde{\mathcal{B}} : \mathcal{W}\times\R \rightarrow \R^N$ is Carath\'eodory function satisfies the following growth condition
%\begin{eqnarray*}\label{bb7}
%\vert \tilde{\mathcal{B}}(x,\eta)\vert  \leq\tilde{c}_0(x)\left(1+\vert \eta\vert ^{p-1}\right), \quad \text{ where } \tilde{c}_0(x) \in L^{\frac{N}{p-1}}(\mathcal{W}).
%\end{eqnarray*}
 the right-hand side, $g\in L^m(\mathcal{W})$ with $m> 1$.
In general, the problem \eqref{problem2} is not coercive and has no weak solution when $g \in L^1(\mathcal{W}), \,\gamma >0$.
 Thus, in the present paper, we face the two difficulties arise from the presence of both the non--linear convection term and Hardy potential.
%This difficulty can be illustrated by considering the following simplest problem, studied in \cite{32},
%\begin{equation*}
%\begin{aligned}
%\begin{cases}
%\ds-\Delta u=\gamma \frac{u}{\vert x\vert^2}+f & \text{in}\, \mathcal{W},\\
%u=0 &\text{on} \, \partial \mathcal{W}.
%\end{cases}
%\end{aligned}
%\end{equation*}
%This problem has no solution when $f \in L^1(\mathcal{W})$ and $\gamma >0$. Thus, the condition $s<(1-\theta)(p-1)$ is insufficient to address the problem \eqref{prototype} when considering data in $L^m(\mathcal{W})$, with $m\geq 1$. 
%Consequently, all this facts make it difficult to use classical methods to prove the existence of a solution to problem \eqref{prototype} even when the data is sufficiently regular.
%the  main  features  of \eqref{prototype} are  the  fact  that  the operator is not coercive and  has  two  additional 
%terms, which  produce a  lack of coercivity,  and  the right-hand  side which  is a measure. 
%
%It should be noted that the primary contribution of this research is the incorporation of both the convection and Hardy potential terms in the same equation. This means that we will be dealing with all the difficulties previously described, at the same time. To our knowledge, the exploration of the combined impact of the non--linear convection term and the Hardy potential has not been undertaken before.
%
%The main contribution of this research is the incorporation of both the convection and Hardy potential terms in the same equation. 
This means that we will be dealing with all the difficulties previously described, at the same time. To our knowledge, the exploration of the combined impact of the non--linear convection term and the Hardy potential has not been undertaken before.
% In order to overcome all this difficulties, we proceed by suitable approximations to eliminate the singularity and make our operator coercive. Furthermore,  in order to defeat  lack of compactness we will  prove an a priori estimates for the approximate solutions, a composite function of this solutions and its gradients  in some Lorentz-Marcinkiewicz spaces by using the fact that the potential $\ds \frac{1}{\vert x\vert^p}$ belongs to $L^{\frac{N}{p},\infty}(\mathcal{W})$ and thanks to an  estimate of the level sets of a functions depending on the approximate solutions which hold by a ''log-type'' estimate.

For ease of reading, in the Sec. 2, we recall some well-know preliminaries, properties and definitions of the Lorentz-Marcinkiewicz space, as well as we set our main assumptions. While in Sec. 3 we give the proof of the existence  result.
%%%%%%%%%%%%%%%%%%%%%%%%%%%%%%%%%%%%%%%%%%%%%%%%%%%
%\section{Preliminaries }

\section{Some preliminaries and definitions}\label{s3}
%For the sake of simplicity, we introduce an auxiliary functions in the following. Given $k > 0$ and $\forall n\in \N^*$, denote by $\mathcal{T}_{k}$ and $h_n,$ the truncation functions
%$$ \mathcal{T}_{k}(t)=\min \{k, \max \{-k, t\}\},\; h_n(t)=1- \frac{\vert \mathcal{T}_{2n}(t)-\mathcal{T}_{n}(t)\vert}{n}, \quad \forall t\in\R.$$
% Let $\theta \in [0,1)$, and let $\varrho:\mathbb{R} \rightarrow \left(0,\infty\right)$ be a decreasing continuous function given by $$\varrho(t)=\frac{1}{(1+\vert t \vert)^{\theta}}.$$ 
% We denote by $\widetilde{\varrho}(t)$ the primitive of $\varrho(t)$, defined as $$\widetilde{\varrho}(t)=\int_{0}^{t} \varrho(r)\;dr,$$ which satisfies the following behavior at $\infty$
%\begin{equation}\label{jj}
%\lim_{\vert t\vert\rightarrow + \infty} \widetilde{\varrho}(t)=\pm \infty.
%\end{equation}
%We notice that
%\begin{equation*}
%\frac{\vert  t\vert ^{\lambda}}{\left(1+\vert \widetilde{\varrho}(t)\vert \right)^{p-1}}\approx \ds\vert t\vert ^{\lambda-(1-\theta )(p-1)}  \text{ as } \; \vert t\vert\rightarrow +\infty,
%\end{equation*}
%then, for any $\lambda<(1-\theta )(p-1)$, we have
% \begin{equation*}
% \ds\lim_{\vert t\vert\rightarrow +\infty}\frac{\vert  t\vert ^\lambda}{(1+\vert \widetilde{\varrho}(t)\vert )^{p-1}}=0,
% \end{equation*}
% so that, we conclude that there exist a positive constant $\widetilde{C}>0$ and a positive real number $k_0$  such that for every $\vert t\vert>k_0$, one has 
% \begin{equation}\label{jjj}
% \ds\frac{\vert  t\vert ^\lambda}{(1+\vert \widetilde{\varrho}(t)\vert )^{p-1}}\leq \widetilde{C}.
% \end{equation}
Now, we give  some basic tools for functional analysis that we will use in our study.  
%For any measurable set $D\subset \R^N$, $\vert D\vert$ denotes its N-dimensional measure.  
The Lorentz space $L^{q,r}(\mathcal{W})$ is the space of Lebesgue measurable functions such that for any  $(q,r)\in (1,\infty)^2$
\begin{equation*}
\Vert f \Vert_{L^{q, r}(\mathcal{W})}=
\ds\left(\int_{0}^{meas(\mathcal{W})}\left[f^{*}(t) t^{\frac{1}{q}}\right]^{r} \frac{d t}{t}\right)^{1 / r}<+\infty.
\end{equation*}
Where, $f^*$ stands for  the decreasing rearrangement of the function $f$ which defined by
\begin{equation*}
f^{*}(t)=\inf \{r \geq 0: \text { meas }\{x \in \mathcal{W}:\vert f(x)\vert>r\}<t\} \quad t \in[0,meas(\mathcal{W})].
\end{equation*}
We also mentioned that, for any $q\in[1,+\infty)$ the Lorentz-Marcinkiewicz space $L^{q, \infty}(\mathcal{W})$ is the set of measurable functions $f:\mathcal{W}\rightarrow \R$  such that
\begin{equation}\label{nrm}
\ds\Vert f\Vert_{L^{q, \infty}(\mathcal{W})}=\sup _{t} t\left(\text { meas }\{x \in \mathcal{W}:\vert f\vert>t\}\right)^{\frac{1}{q}}<+\infty.
\end{equation}
Moreover for any $z$ and $q$ such that $1 \leq q<r<z \leq+\infty$, the following chain of continuous inclusions in Lebesgue spaces holds true
\begin{equation}\label{ceb}
L^{z}(\mathcal{W}) \subset L^{r, \infty}(\mathcal{W}) \subset L^{q}(\mathcal{W}) \subset L^{1}(\mathcal{W}).
\end{equation}
For references about rearrangements see, for example, \cite{29}.

 The following lemmas introduce some well-known inequalities that will be very useful in a number of situations such as a priori estimates.
\begin{lemma}(Poincaré's inequality).
 Suppose $p \in [1,N)$ and $v \in W^{1, p}_0\left(\mathcal{W}\right) .$ Then there exist a constant $c(N,p)$ such that
 \begin{equation}
 \Vert v \Vert_{L^{p}(\mathcal{W})} \leq c(N,p)\Vert \nabla v \Vert_{L^{p}(\mathcal{W})},
 \end{equation}
\end{lemma}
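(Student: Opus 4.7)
The plan is to establish the inequality first on the dense subspace $C_c^\infty(\mathcal{W})$ and then extend to all of $W_0^{1,p}(\mathcal{W})$ by continuity. Because $\mathcal{W}$ is bounded, I can choose $a>0$ large enough that $\mathcal{W}\subset Q:=[-a,a]^N$, and any $u\in C_c^\infty(\mathcal{W})$ can be extended by zero to a smooth compactly supported function on $Q$. This boundedness of $\mathcal{W}$ is the only geometric ingredient needed; in particular, no regularity of $\partial\mathcal{W}$ is required, which is consistent with the hypotheses on $\mathcal{W}$ used throughout the paper.

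The core computation is a one-dimensional fundamental theorem of calculus along the first coordinate. For $x=(x_1,\ldots,x_N)\in Q$, since $u$ vanishes at $x_1=-a$, I would write
\begin{equation*}
u(x_1,x_2,\ldots,x_N)=\int_{-a}^{x_1}\partial_1 u(t,x_2,\ldots,x_N)\,dt.
\end{equation*}
Applying Hölder's inequality on the interval $(-a,a)$ gives
\begin{equation*}
|u(x)|^p\leq (2a)^{p-1}\int_{-a}^{a}\bigl|\partial_1 u(t,x_2,\ldots,x_N)\bigr|^p\,dt,
\end{equation*}
and integration over the slab $Q$ together with Fubini's theorem yields
\begin{equation*}
\int_{\mathcal{W}}|u|^p\,dx\leq (2a)^p\int_{\mathcal{W}}|\partial_1 u|^p\,dx\leq (2a)^p\int_{\mathcal{W}}|\nabla u|^p\,dx,
\end{equation*}
which is the desired inequality with a constant $c(N,p)$ of the form $2a$, depending only on the diameter of $\mathcal{W}$ and, through the Hölder step, on $p$ when stated in terms of an optimized enclosing cube.

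Finally, I would pass to a general $v\in W_0^{1,p}(\mathcal{W})$ by choosing a sequence $\{u_n\}\subset C_c^\infty(\mathcal{W})$ converging to $v$ in the $W^{1,p}$ norm, applying the inequality to each $u_n$, and taking limits on both sides. No real obstacle arises in the proof; the one point requiring a little care is that the Hölder step is also valid for $p=1$ (with the convention $(2a)^{p-1}=1$), so the statement holds on the whole range $p\in[1,N)$ claimed. I note that the restriction $p<N$ is not actually used in the argument above, so it is only present to fit the Sobolev-embedding setting in which the lemma will be invoked later in the paper.
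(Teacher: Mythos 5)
The paper states Poincaré's inequality as a classical fact and gives no proof, so there is no in-paper argument for you to match; your job was to supply one, and you did so correctly. Your proof — extend by zero to an enclosing cube $Q=[-a,a]^N$, write $u(x)$ as the integral of $\partial_1 u$ from the face $x_1=-a$, apply Hölder on the interval to get $|u(x)|^p\leq (2a)^{p-1}\int_{-a}^a|\partial_1 u|^p\,dt$, integrate over $Q$ with Fubini to pick up the extra factor $2a$, and then pass to general $v\in W_0^{1,p}(\mathcal{W})$ by density — is a complete, classical derivation valid for all $p\in[1,\infty)$.

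The one thing worth flagging is a small mismatch between your constant and the paper's advertised $c(N,p)$. Given that the lemma is sandwiched between the Sobolev and Hardy inequalities and restricts to $p\in[1,N)$, the paper almost certainly has in mind the two-line derivation $\Vert v\Vert_{L^p(\mathcal{W})}\leq |\mathcal{W}|^{1/p-1/p^*}\Vert v\Vert_{L^{p^*}(\mathcal{W})}\leq |\mathcal{W}|^{1/p-1/p^*}\mathcal{S}\,\Vert\nabla v\Vert_{L^p(\mathcal{W})}$, which chains the Sobolev inequality (the very next lemma) with Hölder on a finite-measure domain; that route genuinely uses $p<N$ and produces a constant depending on $N$, $p$, and $\mathrm{meas}(\mathcal{W})$, explaining the $c(N,p)$ notation. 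Your argument is more elementary — no Sobolev embedding, hence no restriction to $p<N$, and a cleaner constant equal to twice the half-width of an enclosing cube, which in particular depends on neither $N$ nor $p$. Both are fine; your observation that $p<N$ is not actually used in your proof is correct and worth keeping. Just be aware that asserting the constant depends "on $p$ when stated in terms of an optimized enclosing cube" is a bit of a stretch: the inequality $\Vert v\Vert_{L^p}\leq 2a\Vert\nabla v\Vert_{L^p}$ you actually proved has a $p$-independent constant, and that is perfectly consistent with the lemma as stated.
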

\begin{lemma}(Sobolev's inequality).
 Suppose $p \in [1,N)$ and $v \in W^{1, p}_0\left(\mathcal{W}\right) .$ Then there exist a constant $\mathcal{S}$ such that
\begin{equation*}
\Vert v \Vert_{L^{p^*}(\mathcal{W})} \leq \mathcal{S}\Vert \nabla v \Vert_{L^{p}(\mathcal{W})},
\end{equation*}
with $p^*=\frac{Np}{N-p}.$
\end{lemma}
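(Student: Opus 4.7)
The strategy is the classical Gagliardo--Nirenberg--Sobolev argument, with a density reduction at the end. Since $C_c^\infty(\mathcal{W})$ is dense in $W^{1,p}_0(\mathcal{W})$ and $\mathcal{W}\subset\R^N$, it suffices to prove the inequality for $v\in C_c^\infty(\R^N)$ (extended by zero outside $\mathcal{W}$), then pass to the limit in a Cauchy sequence in $L^{p^*}(\mathcal{W})$.

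The heart of the argument is the case $p=1$. For $v\in C_c^1(\R^N)$, the fundamental theorem of calculus yields, for each coordinate direction $i=1,\dots,N$,
\begin{equation*}
|v(x)| \;\le\; \int_{-\infty}^{+\infty} |\partial_i v(x_1,\dots,t,\dots,x_N)|\,dt .
\end{equation*}
Multiplying the $N$ such inequalities, extracting the $\tfrac{1}{N-1}$ power, and then integrating successively in $x_1,x_2,\dots,x_N$ while applying at each step the generalized H\"older inequality with $N-1$ factors, one arrives at the sharp $L^1$--Sobolev bound
\begin{equation*}
\Vert v \Vert_{L^{N/(N-1)}(\R^N)} \;\le\; \prod_{i=1}^N \Vert \partial_i v \Vert_{L^1(\R^N)}^{1/N} \;\le\; \Vert \nabla v \Vert_{L^1(\R^N)} .
\end{equation*}

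For general $p\in(1,N)$ I would bootstrap from the $p=1$ case by applying it to $w=|v|^{\gamma}$, with $\gamma=\tfrac{p(N-1)}{N-p}$ chosen so that $\tfrac{\gamma N}{N-1}=p^{*}$. Using the chain rule bound $|\nabla w|\le \gamma |v|^{\gamma-1}|\nabla v|$ and H\"older's inequality with conjugate exponents $p$ and $p/(p-1)$ separates $|v|^{\gamma-1}$ from $|\nabla v|$; a short computation shows $(\gamma-1)\tfrac{p}{p-1}=p^{*}$, and rearranging the powers on the two sides yields the desired estimate with $\mathcal{S}=\gamma$.

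The only delicate point is the iterated generalized H\"older inequality in the $p=1$ step: the bookkeeping of exponents (one must apply H\"older in each variable against $N-1$ remaining factors) has to be carried out with care. Once that is settled, everything else --- the algebraic choice of $\gamma$, the single application of H\"older for the jump from $p=1$ to general $p$, and the standard density/Fatou passage to the limit in $W^{1,p}_0(\mathcal{W})$ --- is routine.
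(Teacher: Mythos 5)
Your argument is the classical Gagliardo--Nirenberg--Sobolev proof, and it is correct: the algebra checks out --- with $\gamma=\tfrac{p(N-1)}{N-p}$ one has $\tfrac{\gamma N}{N-1}=p^{*}$ and $(\gamma-1)\tfrac{p}{p-1}=p^{*}$, and after dividing through by $\|v\|_{L^{p^*}}^{(p-1)/p}$ the exponent $\tfrac{N-1}{N}-\tfrac{p-1}{p}=\tfrac{1}{p^*}$ gives exactly the claimed inequality with $\mathcal{S}=\gamma$. The paper, however, supplies no proof of this lemma at all: it is stated as a standard fact (and unlike the adjacent Hardy inequality, it is not even given a reference), so there is nothing in the paper's text to compare your argument against. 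Your density reduction and the division step are both fine for $v\in C_c^\infty$; the only small thing worth making explicit is that when you divide by $\|v\|_{L^{p^*}}^{(p-1)/p}$ you should first dispose of the trivial case $v\equiv 0$, which you implicitly do.
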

\begin{lemma}(Hardy's inequality).
 Suppose $p \in (1,N)$ and $u \in W_0^{1, p}\left(\mathcal{W}\right).$  Then we have
\begin{equation*}
\int_{\mathcal{W}} \frac{\vert v\vert^{p}}{\vert x\vert^{p}}\; dx \leq \mathcal{H} \int_{\mathcal{W}}\vert\nabla v\vert^{p}\;dx,
\end{equation*}
with $\mathcal{H}=\left(\frac{p}{N-p}\right)^{p}$ optimal and not achieved constant.
\end{lemma}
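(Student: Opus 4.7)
The plan is to prove Hardy's inequality in three stages: first reduce to $v\in C_c^\infty(\mathcal{W})$ by density of smooth compactly supported functions in $W_0^{1,p}(\mathcal{W})$, then establish the bound via a divergence identity combined with Hölder's inequality, and finally address the optimality and non-attainment of $\mathcal{H}$. The density reduction is safe because Fatou's lemma controls the (possibly singular) left-hand side under $L^p$-convergence of gradients, while the right-hand side converges directly.

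First I would exploit the pointwise identity
\[
\operatorname{div}\!\left(\frac{x}{|x|^{p}}\right)=\frac{N-p}{|x|^{p}}\quad\text{on }\R^N\setminus\{0\},
\]
to rewrite, for $v\in C_c^\infty(\mathcal{W})$,
\[
\int_{\mathcal{W}}\frac{|v|^p}{|x|^p}\,dx=\frac{1}{N-p}\int_{\mathcal{W}}|v|^p\,\operatorname{div}\!\left(\frac{x}{|x|^p}\right)dx.
\]
The ensuing integration by parts must be performed on $\mathcal{W}\setminus B_\varepsilon(0)$ in order to avoid the origin; the flux across $\partial B_\varepsilon$ is of order $\varepsilon^{N-p}|v(0)|^p$, which tends to zero as $\varepsilon\to 0$ thanks to $p<N$, while the flux across $\partial\mathcal{W}$ vanishes because $v$ has compact support. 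Passing to the limit I obtain
\[
\int_{\mathcal{W}}\frac{|v|^p}{|x|^p}\,dx=-\frac{p}{N-p}\int_{\mathcal{W}}|v|^{p-2}v\,\nabla v\cdot\frac{x}{|x|^p}\,dx,
\]
and apply Hölder's inequality with conjugate exponents $p/(p-1)$ and $p$, using $|x\cdot\nabla v|/|x|^p\leq|\nabla v|/|x|^{p-1}$. The right-hand side then factors as $\bigl(\int_{\mathcal{W}}|v|^p/|x|^p\,dx\bigr)^{(p-1)/p}\bigl(\int_{\mathcal{W}}|\nabla v|^p\,dx\bigr)^{1/p}$; assuming the first integral is finite and nonzero (otherwise the inequality is trivial), I absorb it into the left-hand side and raise to the $p$-th power, producing exactly $\mathcal{H}=\bigl(p/(N-p)\bigr)^p$.

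The main obstacle will be the sharpness claim. Optimality of $\mathcal{H}$ follows by testing against a truncated, regularized version of the formal extremal $v_{\star}(x)=|x|^{-(N-p)/p}$: cutting $v_{\star}$ off at scale $\delta$ near the origin and with a fixed bump away from $\partial\mathcal{W}$, a polar-coordinate computation shows that the Rayleigh quotient $\int|\nabla v_\delta|^p\,dx\big/\int|v_\delta|^p|x|^{-p}\,dx$ converges to $\bigl((N-p)/p\bigr)^p$ as $\delta\to 0$. Non-attainment follows by tracking the equality case: equality in the Hölder step forces $|\nabla v|$ to be proportional to $|v|/|x|$ and $\nabla v$ to point in the radial direction opposite to $x\,\mathrm{sign}(v)$; solving this ODE reproduces $v_{\star}$, which does not lie in $W_0^{1,p}(\mathcal{W})$ since its gradient is not $p$-integrable near the origin. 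These sharpness computations are classical but delicate, and I would carry them out only after the inequality itself is secured.
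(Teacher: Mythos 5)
The paper does not actually prove this lemma: its ``proof'' is a single citation to reference [40] (Garc\'{i}a Azorero and Peral Alonso). Your proposal, by contrast, gives a self-contained argument, so a line-by-line comparison with the paper is not possible; what I can do is check your argument on its own terms.

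Your argument is the classical proof and it is correct. The divergence identity $\operatorname{div}(x/|x|^p)=(N-p)/|x|^p$ holds away from the origin; for smooth compactly supported $v$, the quantity $\int_{\mathcal{W}}|v|^p|x|^{-p}\,dx$ is automatically finite (since $|v|^p$ is bounded and $p<N$), so the absorption step is legitimate; the inner boundary flux on $\partial B_\varepsilon$ is indeed $O(\varepsilon^{N-p})\to 0$; H\"older with exponents $p/(p-1)$ and $p$ combined with $|x\cdot\nabla v|\le|x||\nabla v|$ produces exactly $\mathcal{H}=(p/(N-p))^p$; and the density step using Fatou on the left and strong $L^p$ convergence of gradients on the right is the right way to pass from $C_c^\infty$ to $W_0^{1,p}$. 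The optimality discussion via truncated power functions and the non-attainment analysis via the equality case in H\"older are the standard route and are sketched at an appropriate level of detail --- though I would note one small point worth making explicit if you wrote this out in full: equality in the final chain requires not just proportionality of $|\nabla v|$ and $|v|/|x|$ but also equality in the Cauchy--Schwarz step $|x\cdot\nabla v|\le|x||\nabla v|$ \emph{and} the correct sign of $v\,(x\cdot\nabla v)$, and all three conditions together force the radial profile $|x|^{-(N-p)/p}$, whose gradient fails to be $p$-integrable near the origin. In short, your proof is sound and provides genuine content where the paper only points to the literature.
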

\begin{proof}
See \cite{40}.
\end{proof}
An important lemma which  generalize a result of \cite{38} is the following:
\begin{lemma}\label{lapes}
 Assume that $\mathcal{W}$ is an open subset of $\mathbb{R}^{N}$ with finite measure and that 
 $p\in (1,N)$. Let $\psi$ be a measurable function satisfying $\mathcal{T}_{k}(\psi) \in W_{0}^{1, \upsilon}(\mathcal{W})$, for every positive $k$, and such that for some constants $M$ and $L$ we have the inequality
\begin{equation*}
\ds\int_{\mathcal{W}}\vert\nabla \mathcal{T}_{k}(\psi)\vert^{\upsilon}\;dx \leq  Mk^\tau+L, \quad \forall k>0,
\end{equation*}
where $(\tau,\upsilon)\in(0,N)^2$ are given constants. Then $\ds\vert \psi\vert^{\upsilon-\tau}$ belongs to $\ds L^{\frac{N}{N-\upsilon}, \infty}(\mathcal{W}),$ $\vert\nabla \psi\vert^{\upsilon-\tau}$ belongs to $L^{\frac{N}{N-\tau}, \infty}(\mathcal{W})$ and
there exists a constant $C$  which depending only on $N$ and $p$ such that
\begin{equation}\label{es1}
\ds\left\Vert \vert \psi \vert^{\upsilon-\tau}\right\Vert _{L^{\frac{N}{N-\upsilon},\infty}(\mathcal{W})}\leq C(N,p)\left[M+meas(\mathcal{W})^{1-\frac{\upsilon}{\tau}}L^{\frac{\upsilon-\tau}{\upsilon}} \right],
\end{equation} 
\begin{equation}\label{es2}
\ds\left\Vert  \vert \nabla \psi \vert^{\upsilon-\tau}\right\Vert _{L^{\frac{N}{N-\tau},\infty}(\mathcal{W})}\leq C(N,p)\left[M+meas(\mathcal{W})^{\frac{\tau}{\upsilon}(1-\frac{\upsilon}{\tau})}L^{\frac{\upsilon-\tau}{\upsilon}}  \right].
\end{equation} 
\end{lemma}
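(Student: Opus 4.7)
The plan is to combine the Sobolev inequality applied to the truncation $\mathcal{T}_k(\psi)$ with Chebyshev-type level-set inequalities, following the classical Boccardo--Gallou\"et scheme, then optimize over the truncation level $k$. Throughout, I write $\upsilon^{*}=N\upsilon/(N-\upsilon)$ and recall the algebraic identity $\upsilon^{*}(N-\upsilon)/N=\upsilon$, which is what makes the exponents collapse.

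First, since $\mathcal{T}_k(\psi)\in W_0^{1,\upsilon}(\mathcal{W})$, the Sobolev inequality and the assumption give
$$\|\mathcal{T}_k(\psi)\|_{L^{\upsilon^{*}}(\mathcal{W})}^{\upsilon}\leq \mathcal{S}^{\upsilon}\|\nabla\mathcal{T}_k(\psi)\|_{L^{\upsilon}(\mathcal{W})}^{\upsilon}\leq \mathcal{S}^{\upsilon}(Mk^{\tau}+L).$$
Using that $|\mathcal{T}_k(\psi)|=k$ on $\{|\psi|>k\}$ yields the distributional bound
$$k^{\upsilon^{*}}\,\mathrm{meas}\{|\psi|>k\}\leq \mathcal{S}^{\upsilon^{*}}(Mk^{\tau}+L)^{\upsilon^{*}/\upsilon}\qquad\forall k>0. \qquad(\ast)$$
Raising $(\ast)$ to the power $(N-\upsilon)/N$ and using $\upsilon^{*}(N-\upsilon)/N=\upsilon$ one obtains
$$k^{\upsilon-\tau}\bigl(\mathrm{meas}\{|\psi|>k\}\bigr)^{(N-\upsilon)/N}\leq \mathcal{S}^{\upsilon}\,k^{-\tau}(Mk^{\tau}+L)=\mathcal{S}^{\upsilon}\bigl(M+L\,k^{-\tau}\bigr).$$
This Sobolev-type bound is sharp for large $k$ but blows up as $k\to 0^{+}$; in the small-$k$ regime one invokes instead the trivial bound $\mathrm{meas}\{|\psi|>k\}\leq \mathrm{meas}(\mathcal{W})$, which gives $k^{\upsilon-\tau}[\mathrm{meas}(\mathcal{W})]^{(N-\upsilon)/N}$. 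Balancing the two regimes at the threshold $k_0:=(L/M)^{1/\tau}$ (where $Mk_0^{\tau}=L$) and taking the supremum in $k$ yields (\ref{es1}) with a constant of the required form $C(N,p)\bigl[M+\mathrm{meas}(\mathcal{W})^{1-\upsilon/\tau}L^{(\upsilon-\tau)/\upsilon}\bigr]$.

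For (\ref{es2}) the starting point is the inclusion
$$\{|\nabla\psi|>h\}\subset\{|\psi|>k\}\cup\{|\nabla\mathcal{T}_k(\psi)|>h\},\qquad h,k>0,$$
which is valid because, by Definition \ref{dff1}, $\nabla\psi=\nabla\mathcal{T}_k(\psi)$ on $\{|\psi|\leq k\}$. Chebyshev's inequality together with the hypothesis gives
$$h^{\upsilon}\,\mathrm{meas}\{|\nabla\mathcal{T}_k(\psi)|>h\}\leq Mk^{\tau}+L,$$
and combining with $(\ast)$ yields
$$\mathrm{meas}\{|\nabla\psi|>h\}\leq \mathcal{S}^{\upsilon^{*}}\frac{(Mk^{\tau}+L)^{\upsilon^{*}/\upsilon}}{k^{\upsilon^{*}}}+\frac{Mk^{\tau}+L}{h^{\upsilon}}.$$
The main computational step is then the optimization in $k$: choosing $k$ proportional to $h^{(N-\upsilon)/(N-\tau)}$ makes the two contributions decay in $h$ with the same rate $h^{N(\tau-\upsilon)/(N-\tau)}$, which is exactly what is needed so that, after raising to the power $(N-\tau)/N$ and multiplying by $h^{\upsilon-\tau}$, the quantity $h^{\upsilon-\tau}[\mathrm{meas}\{|\nabla\psi|>h\}]^{(N-\tau)/N}$ is bounded independently of $h$. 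As for (\ref{es1}), the $L$ term forces a secondary split in $h$ in which, for small $h$, one uses the trivial bound $\mathrm{meas}\{|\nabla\psi|>h\}\leq\mathrm{meas}(\mathcal{W})$; this is where the factor $\mathrm{meas}(\mathcal{W})^{(\tau/\upsilon)(1-\upsilon/\tau)}L^{(\upsilon-\tau)/\upsilon}$ in (\ref{es2}) is produced.

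The main obstacle I anticipate is the bookkeeping in the two-parameter optimization for (\ref{es2}): the contribution of $L$ propagates nonlinearly through the $(\cdot)^{\upsilon^{*}/\upsilon}$ power coming from Sobolev, and the correct dependence on $\mathrm{meas}(\mathcal{W})$ emerges only after separating the small-$h$ and large-$h$ regimes and inserting the optimal $k=k(h)$. Once the scaling $k\propto h^{(N-\upsilon)/(N-\tau)}$ is identified, the remaining steps are purely algebraic, with the constant $C(N,p)$ absorbing the Sobolev constant $\mathcal{S}$ and the numerical factors that arise in the balancing.
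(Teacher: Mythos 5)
The paper gives no internal proof of this lemma; it cites it as ``See \cite{16}.'' Your reconstruction follows the classical Boccardo--Gallou\"et scheme (Sobolev on the truncations, Chebyshev on level sets, splitting $\{|\nabla\psi|>h\}\subset\{|\psi|>k\}\cup\{|\nabla\mathcal{T}_k(\psi)|>h\}$, then optimization in $k$), which is indeed the standard route to results of this type and almost certainly the one used in the cited reference. The choice $k\propto h^{(N-\upsilon)/(N-\tau)}$ for \eqref{es2} is the correct scaling, and the exponent collapse $\upsilon^{*}(N-\upsilon)/N=\upsilon$ you invoke is exactly what makes the argument close.

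The gap is in the balancing step for \eqref{es1}. You choose $k_0=(L/M)^{1/\tau}$, which equates the two \emph{terms} $Mk^{\tau}$ and $L$ of the energy bound rather than the two \emph{regimes} (Sobolev--Chebyshev versus the trivial bound $\mathrm{meas}\{|\psi|>k\}\leq\mathrm{meas}(\mathcal{W})$). With that threshold the small-$k$ contribution becomes $k_0^{\upsilon-\tau}\,\mathrm{meas}(\mathcal{W})^{(N-\upsilon)/N}=(L/M)^{(\upsilon-\tau)/\tau}\,\mathrm{meas}(\mathcal{W})^{(N-\upsilon)/N}$, which has the wrong exponent on $L$ (you get $(\upsilon-\tau)/\tau$ rather than the stated $(\upsilon-\tau)/\upsilon$), introduces a spurious negative power of $M$, and blows up as $M\to0$ — so it cannot reduce to the claimed form. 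The threshold you want is the crossing point of the trivial bound $k^{\upsilon-\tau}\mathrm{meas}(\mathcal{W})^{(N-\upsilon)/N}$ with the $L$-part $\mathcal{S}^{\upsilon}Lk^{-\tau}$ of the Sobolev estimate, namely $k_0\sim L^{1/\upsilon}\mathrm{meas}(\mathcal{W})^{-(N-\upsilon)/(N\upsilon)}$; this produces a bound of the shape $C\bigl[M+\mathrm{meas}(\mathcal{W})^{\delta}L^{(\upsilon-\tau)/\upsilon}\bigr]$ with the correct $L$-exponent, and $\delta$ can then be read off. (The same caveat applies to \eqref{es2}: once the $L$-term is isolated, the $h$-split must be balanced against the trivial bound $\mathrm{meas}(\mathcal{W})$ in a manner consistent with this choice, not with $Mk^{\tau}=L$.) You also never actually verify that the resulting constant matches the stated form, so the final assertion that the required form is obtained is not justified by the work shown.
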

\begin{proof}
See \cite{16}.
\end{proof}
Consider a nonlinear elliptic problem which can be written as
\begin{equation}\label{problem}
\begin{cases}-\operatorname{div}(b(x,v,\nabla v)+\mathcal{B}(x,v))=\gamma \ds\frac{\vert v\vert^{s-1}v}{\vert x\vert^{p}}+f(x) & \text { in } \mathcal{W},  \\ u=0 & \text { on } \partial \mathcal{W},\end{cases}
\end{equation}
with  $\gamma$ and $s$ are positive constants. Throughout the paper, we assume that the following assumptions hold
true:

$b : \mathcal{W}\times\R\times\R^N \rightarrow \R^N$  is a Carath\'eodory function which satisfies  assumptions:
\begin{equation}\label{b3}
b(x,\eta, \xi).\xi \geq \alpha\varrho^{p-1}(\eta)\vert \xi\vert ^p,
\end{equation}
\begin{equation}\label{b4}
\vert b(x,\eta,\xi)\vert  \leq \mathcal{C} (G(x)+\vert \eta\vert ^{p-1}+\vert \xi\vert ^{p-1}),
\end{equation}
\begin{equation}\label{b5}
 [ b(x,\eta,\xi)-b(x,\eta,\xi')][\xi-\xi'] > 0,
\end{equation}
for almost every $x \in \mathcal{W}$, for every $(\eta,\xi) \in  \R \times\mathbb{R}^N$, $\alpha$  and $\mathcal{C}$ are positive real number, and $G$ is a  nonnegative function in $ L^{p^\prime}(\mathcal{W}).$

$
\mathcal{B} : \mathcal{W}\times\R \rightarrow \R^N \text{ is Carath\'eodory function satisfies the growth condition}
$
\begin{equation}\label{b7}
\vert \mathcal{B}(x,\eta)\vert  \leq c_0(x)\vert \eta\vert ^\lambda,
\end{equation}
 with $0\leq \lambda< (1-\theta)(p-1)$ and  $c_0(x) \in L^{\frac{N}{p-1}}(\mathcal{W}).$
\begin{equation}\label{b8}
 0\leq s< \frac{p(1-\theta )(p-1)}{p^*},\;\gamma\geq 0 \;\text{and}\;f\in L^m(\mathcal{W}),\;\text{with}\; m\geq 1.
\end{equation}

 We first give the definition of a renormalized solutions to problem \eqref{problem}. Then, we will discuss the existence of a renormalized solutions to problem \eqref{problem}. 

We will now provide the definition of a renormalized solutions to problem $\eqref{problem}$.
\begin{definition}\label{d1}
A function $v:\mathcal{W}\rightarrow \R$ is considered a renormalized solutions to Problem $\eqref{problem}$ if it satisfies the following conditions:
\begin{equation}\label{p0}
 v\text{ is measurable and finite almost everywhere in $\mathcal{W}$,}
\end{equation}
\begin{equation}\label{p1}
\mathcal{T}_{k}(\widetilde{\varrho}(v))\in W_0^{1,p}(\mathcal{W})\,\,\,\, \forall k>0,
\end{equation}
\begin{equation}\label{p2}
\lim_{n\rightarrow +\infty}\frac{1}{n}\displaystyle\int_{\lbrace \vert \widetilde{\varrho}(v)\vert \leq n\rbrace}\varrho( v) b(x,v,\nabla v)\nabla v\;dx=0,
\end{equation}
and if for any $h\in W^{1,\infty}(\R)$ with compact support in $\R$, we have
\begin{equation}\label{vp}
\begin{array}{l}
\displaystyle\int_{\mathcal{W}} b(x,v,\nabla v) \varrho(v)\nabla v h^{\prime}(\widetilde{\varrho}(v)) \varphi\;dx+\displaystyle\int_{\mathcal{W}} b(x,v,\nabla v)  \nabla \varphi h(\widetilde{\varrho}(v))\;dx \\
\quad+\displaystyle\int_{\mathcal{W}} \mathcal{B}(x,v) \varrho(v) \nabla v h^{\prime}(\widetilde{\varrho}(v)) \varphi\;dx+\int_{\mathcal{W}} \mathcal{B}(x,v) \nabla \varphi h(\widetilde{\varrho}(v)) \;dx\\
\quad
=\gamma \displaystyle\int_{\mathcal{W}} \frac{\vert v\vert ^{s-1}v}{\vert x\vert ^p}h(\widetilde{\varrho}(v))\varphi \;dx+  \int_{\mathcal{W}} f h(\widetilde{\varrho}(v)) \varphi \;dx\quad
\end{array}
\end{equation}
for every  $\varphi \in W_0^{1,p}(\mathcal{W}) \cap L^{\infty}(\mathcal{W})$.
\end{definition}
\begin{remark}\label{rmk2}
 We notice that, since $\widetilde{\varrho}( \pm \infty)= \pm \infty$ which means that the set $\{\vert \widetilde{\varrho}(v)\vert \leq n\}$ may be equivalent to $\left\{\vert u\vert \leq k_n\right\}$ with $k_n=\max\lbrace \widetilde{\varrho}^{-1}(n),\widetilde{\varrho}^{-1}(-n) \rbrace$, then, due to \eqref{p1}  we deduce that the condition \eqref{p2} is well defined.
\end{remark}
\begin{remark}
 It is worth noting that growth assumption $\eqref{b7}$ on $\mathcal{B}$ together with $\eqref{p0}-\eqref{p2}$ allow to prove that any renormalized solutions $u$ verifies
\begin{equation}\label{pp13}
\lim _{n \rightarrow+\infty} \frac{1}{n} \int_{\mathcal{W}}\vert\mathcal{B}(x,v)\vert\vert\nabla \mathcal{T}_{n}(\widetilde{\varrho}(v))\vert d x=0.
\end{equation}
Indeed, \eqref{jj}, \eqref{jjj} and the growth assumption $\eqref{b7}$ imply that
\begin{align*}
\int_{\mathcal{W}}\vert\mathcal{B}(x,v)\vert \vert\nabla \mathcal{T}_{n}(\widetilde{\varrho}(v))\vert \;dx &\leq  \int_{\mathcal{W}} c_0(x)\vert u\vert^{\lambda}\vert\nabla \mathcal{T}_{n}(\widetilde{\varrho}(v))\vert\; dx,\\
&=\int_\mathcal{W} c_0(x)\frac{\vert u \vert^\lambda\left(1+\vert \widetilde{\varrho}(v)\vert\right)^{p-1}}{\left(1+\vert \widetilde{\varrho}(v)\vert\right)^{p-1}}\vert\nabla \mathcal{T}_{n}(\widetilde{\varrho}(v))\vert\; dx,\\&
\leq \widetilde{C}\int_\mathcal{W} c_0(x)\left(1+\vert \widetilde{\varrho}(v)\vert\right)^{p-1}\vert\nabla \mathcal{T}_{n}(\widetilde{\varrho}(v))\vert\;dx,\\&
\leq \widetilde{\mathcal{C}}c_p
 \int_{\mathcal{W}} c_0(x)\vert\nabla \mathcal{T}_{n}(\widetilde{\varrho}(v))\vert d x\\&
+
\widetilde{\mathcal{C}}c_p\int_{\mathcal{W}} c_0(x)\vert \mathcal{T}_{n}(\widetilde{\varrho}(v))\vert^{p-1}\vert\nabla \mathcal{T}_{n}(\widetilde{\varrho}(v))\vert d x,
\end{align*}
where $c_p=\ds\max\left\lbrace 1,2^{p-2}\right\rbrace$.

By  Hölder's and Sobolev's  inequalities  it follows that 
\small{
\begin{equation*}
 \begin{aligned}
 \int_{\mathcal{W}}\vert\mathcal{B}(x,v)\vert \vert\nabla \mathcal{T}_{n}(\widetilde{\varrho}(v))\vert d x &\leq \widetilde{\mathcal{C}}
 \int_{\mathcal{W}} c_0(x)\vert\nabla \mathcal{T}_{n}(\widetilde{\varrho}(v))\vert d x\\&+
\widetilde{\mathcal{C}}c_p\int_{\mathcal{W}} c_0(x)\vert \mathcal{T}_{n}(\widetilde{\varrho}(v))\vert^{p-1}\vert\nabla \mathcal{T}_{n}(\widetilde{\varrho}(v))\vert d x,\\& 
\leq \widetilde{\mathcal{C}}c_p \Vert c_0(x)\Vert_{L^{p^\prime}(\mathcal{W})}\Vert\nabla \mathcal{T}_{n}(\widetilde{\varrho}(v))\Vert_{L^{p}(\mathcal{W})}\\&+
\widetilde{\mathcal{C}}c_p\Vert c_0(x)\Vert_{L^{\frac{N}{p-1}}(\mathcal{W})}\left\Vert \mathcal{T}_{n}(\widetilde{\varrho}(v))\right\Vert_{L^{p^{*}}(\mathcal{W})}^{p-1}\left\Vert\nabla \mathcal{T}_{n}(\widetilde{\varrho}(v))\right\Vert_{L^{p}(\mathcal{W})},\\
&\leq \widetilde{\mathcal{C}}c_p \Vert c_0(x)\Vert_{L^{p^\prime}(\mathcal{W})}\Vert\nabla \mathcal{T}_{n}(\widetilde{\varrho}(v))\Vert_{L^{p}(\mathcal{W})}\\&+\widetilde{\mathcal{C}}c_p\mathcal{S}^{p-1}\Vert c_0(x)\Vert_{L^{\frac{N}{p-1}}(\mathcal{W})}\left\Vert\nabla \mathcal{T}_{n}(\widetilde{\varrho}(v))\right\Vert_{L^{p}(\mathcal{W})}^{p},
\end{aligned}
\end{equation*}}
which, using Young's inequality  and $\eqref{p2}$, gives \eqref{pp13}.
\end{remark}
 \begin{remark}
Note that the term $\displaystyle\gamma\int_{\mathcal{W}} \frac{\vert  v\vert^{s-1}v}{\vert x\vert^p}h(\widetilde{\varrho}(v))\varphi \;dx$ is well-defined. Indeed, let $k>0$ such that $supp(h)\subset [-k,k]$. 
It follows from \eqref{jjj}, Hölder's and Hardy Inequalities that
 \begin{align*}
 \ds\left\vert\gamma\int_{\mathcal{W}} \frac{\vert v\vert^{s-1}v}{\vert x\vert^p} h(\widetilde{\varrho}(v))v \;dx\right\vert&\leq\gamma\int_{\mathcal{W}} \frac{\vert v\vert^s}{(1+\vert \widetilde{\varrho}(v)\vert)^{p-1}}\frac{(1+\vert \widetilde{\varrho}(v)\vert)^{p-1}}{\vert x\vert^p}\vert h(\widetilde{\varrho}(v))\vert \vert \varphi\vert \;dx,
%\\
% &\leq \gamma \widetilde{C}\int_{\mathcal{W}} \frac{\left(1+\vert \mathcal{T}_{k}(\widetilde{\varrho}(v))\vert\right)^{p-1}}{\vert x\vert^p}h(\widetilde{\varrho}(v))v \;dx,
 \\
 &\leq C_\gamma\left[\left(\int_{\mathcal{W}}\frac{dx}{\vert x\vert^p}\;dx\right)^{\frac{1}{p^\prime}}+\mathcal{H}^{\frac{1}{p^\prime}}\left(\int_\mathcal{W}\vert \nabla \mathcal{T}_{k}(\widetilde{\varrho}(v))\vert^p \;dx\right)^{\frac{1}{p^\prime}}\right]
 \end{align*}
% Taking into account that for every $\theta <1$ the function $\widetilde{\varrho}(v)$ behaves like $\vert u \vert^{1-\theta }$, H�lder's and Hardy Inequalities, we obtain
% \begin{align*}
%\displaystyle\gamma\int_{\mathcal{W}} \frac{\vert T_{k(b)}(\widetilde{\varrho}(v)) \vert^{\frac{s}{1-\theta}}}{\vert x\vert^p}\vert h(\widetilde{\varrho}(v))v\vert \;dx &\leq C_1\left( \displaystyle\int_{\mathcal{W}} \frac{\vert T_{k(b)}(\widetilde{\varrho}(v)) \vert^{p}}{\vert x\vert^p}\right)^{\frac{s}{p(1-\theta)}}\left(\displaystyle\int_{\mathcal{W}} \frac{dx}{\vert x \vert^p} \right)^{\frac{p(1-\theta)-s}{p(1-\theta)}},
%\\&\leq C_2 \left(\displaystyle\int_{\mathcal{W}} \vert \nabla T_{k(b)}(\widetilde{\varrho}(v))\vert^p \;dx \right)^{\frac{s}{p(1-\theta)}},
% \end{align*}
 where 
\begin{equation}
C_\gamma=\ds \gamma c_p \widetilde{\mathcal{C}}\Vert h\Vert_{L^\infty(\R)}\Vert \varphi\Vert_{L^\infty(\mathcal{W})}\left(\int_{\mathcal{W}}\frac{dx}{\vert x\vert^p}\;dx \right)^{\frac{1}{p}}.
\end{equation}  
 %and $C_2=C_1\mathcal{H}^{\frac{s}{p(1-\theta)}}\left(\displaystyle\int_{\mathcal{W}} \frac{dx}{\vert x \vert^p} \right)^{\frac{p(1-\theta)-s}{p(1-\theta)}}.$
 Then we have, from $\eqref{p1}$, that $\ds\frac{\vert v\vert^{s-1}v}{\vert x\vert^p}h(\widetilde{\varrho}(v))\varphi \in L^{1}(\mathcal{W}).$
 \end{remark}
 \begin{remark}
The renormalized equation \eqref{vp} is formally obtained through a pointwise multiplication of \eqref{problem} by $h(v) \varphi$. Let us observe that by the previous remark, \eqref{p1} and the properties of $h$, every term in \eqref{vp} makes sense.
\end{remark}
\section{Existence of renormalized solutions}\label{s4}
The main result of the present paper is the following existence result.
\begin{theorem}\label{t1}
 Let us assume that the assumptions $\eqref{b3}-\eqref{b8}$ hold and suppose that $\ds f\in L^m(\mathcal{W})$ with 
 \begin{equation}
 \ds 1\leq m<m_\theta=\frac{pN}{pN-(N-p)(\theta(p-1)+1)}.
 \end{equation}
 If $\ds \lambda<(1-\theta)(p-1),$ $\ds s<\frac{p(1-\theta)(p-1)}{p^*}$ and $\ds\gamma\geq0$, then there exists a renormalized solutions
of equation $(\ref{problem})$.
\end{theorem}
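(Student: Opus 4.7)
The plan is to establish existence by approximation, adapting the strategy of \cite{16} to accommodate the Hardy potential as in \cite{22}. I would first regularize the problem by replacing $f$ with a truncation $f_n = \mathcal{T}_n(f)$, the singular Hardy source $\frac{|v|^{s-1}v}{|x|^{p}}$ with $\frac{|v_n|^{s-1}v_n}{|x|^{p}+1/n}$, and the non-coercive convection term by $\mathcal{B}_n(x,\eta) = \mathcal{B}(x,\mathcal{T}_n(\eta))$. The resulting operator is fully coercive and pseudo-monotone, so standard Leray--Lions theory produces approximate solutions $v_n \in W_0^{1,p}(\mathcal{W})\cap L^{\infty}(\mathcal{W})$.

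The core of the argument is the derivation of $n$-independent a priori estimates. I would test the approximate equation with $\mathcal{T}_k(\widetilde{\varrho}(v_n))$ and exploit the identity $\nabla \widetilde{\varrho}(v_n) = \varrho(v_n)\nabla v_n$; then \eqref{b3} yields the crucial pointwise inequality
\begin{equation*}
b(x,v_n,\nabla v_n)\cdot \nabla \mathcal{T}_k(\widetilde{\varrho}(v_n)) \geq \alpha\,|\nabla \mathcal{T}_k(\widetilde{\varrho}(v_n))|^{p},
\end{equation*}
so the problem becomes coercive in the new variable $\widetilde{\varrho}(v_n)$. Using \eqref{jjj} one controls the convection term by $\widetilde{C}\,c_0(x)(1+|\widetilde{\varrho}(v_n)|)^{p-1}$ and the Hardy source by $\widetilde{C}\,(1+|\widetilde{\varrho}(v_n)|)^{p-1}/|x|^{p}$; a combination of Hölder, Sobolev and Hardy's inequalities, together with Young's inequality to absorb the convection contribution into the coercive part, produces a linear-in-$k$ energy bound $\int_{\mathcal{W}}|\nabla \mathcal{T}_k(\widetilde{\varrho}(v_n))|^{p}\,dx \leq Mk + L$. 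The new ingredient is an $L^{1}$-control of the Hardy term: by testing with a regularization of $\mathrm{sign}(v_n)$ in the spirit of \cite{22}, and using the exponent condition $s < p(1-\theta)(p-1)/p^{*}$, I would establish that $\gamma|v_n|^{s-1}v_n/|x|^{p}$ is uniformly bounded in $L^{1}(\mathcal{W})$. Feeding the linear-in-$k$ energy bound into Lemma~\ref{lapes} and the embeddings \eqref{ceb} then delivers the decay estimate \eqref{Nn} for $\widetilde{\varrho}(v_n)$ and bounds on $|\nabla \widetilde{\varrho}(v_n)|^{p-1}$ in an appropriate Marcinkiewicz space.

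These estimates permit extracting a subsequence with $\widetilde{\varrho}(v_n)\to \widetilde{\varrho}(v)$ almost everywhere and strongly in $L^{q}(\mathcal{W})$ for suitable $q$, and hence $v_n\to v$ a.e. To identify the nonlinear flux I would apply a Boccardo--Murat/Minty-type monotonicity argument on the sets $\{|\widetilde{\varrho}(v)|\leq k\}$, leveraging \eqref{b5} to upgrade weak to almost-everywhere convergence of $\nabla \widetilde{\varrho}(v_n)$. The equi-integrability following from the Marcinkiewicz estimates, combined with the $L^{1}$-Hardy bound, justifies the passage to the limit in the renormalized formulation tested against $h(\widetilde{\varrho}(v_n))\varphi$, and verifies \eqref{p2} via the linear-in-$k$ energy bound. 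Verification of \eqref{p0}--\eqref{vp} is then routine, taking into account that $m < m_{\theta}$ ensures $f$ sits in the correct dual scale.

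The principal obstacle is the interaction between the non-coercive convection term and the Hardy singularity at the origin: neither is an $L^{1}$-admissible perturbation, and the Lorentz-space approach of \cite{16} does not apply directly because the Hardy source is not a pure function of $v$. The key step that unlocks the argument is the uniform $L^{1}$-bound on the Hardy contribution under the sharp restriction $s < p(1-\theta)(p-1)/p^{*}$; once this is secured, the framework of \cite{16} can be invoked essentially verbatim after the change of variable $v \mapsto \widetilde{\varrho}(v)$.
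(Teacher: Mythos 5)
Your proposal follows essentially the same strategy as the paper: the same approximation scheme (truncating the data, the convection nonlinearity, and the Hardy source), the same change of variables $v\mapsto\widetilde{\varrho}(v)$ to restore coercivity, the same linear-in-$k$ energy bound fed into Lemma~\ref{lapes} to obtain Marcinkiewicz estimates, the $L^1$ control of the Hardy term in the spirit of \cite{22} as the key new ingredient needed beyond \cite{16}, a monotonicity argument via \eqref{b5} for the a.e.~convergence of $\nabla v_\varepsilon$, and a pass to the limit with $h\circ\widetilde{\varrho}$ test functions. One small point worth making precise when you carry this out: the $L^1$ bound on $\gamma|v_\varepsilon|^{s}/|x|^{p}$ does not come directly from a sign-type test, but from a bootstrap in which the sign-type ($\psi_p$) test yields a measure decay estimate with an $\varepsilon$-dependent threshold tied to the quantity $M_\varepsilon$ that already contains the Hardy $L^1$-norm; the Marcinkiewicz estimates then give $\bigl\||v_\varepsilon|^{s}/|x|^{p}\bigr\|_{L^1}\leq C_4 M_\varepsilon^{s/(p-1)}+C_5 L_\varepsilon^{s/p}$, and the restriction $s<\tfrac{p(1-\theta)(p-1)}{p^*}$ ensures the exponents are strictly below one, so Young's inequality absorbs $M_\varepsilon$ and closes the loop.
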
 
\begin{proof}
The proof of this result follows a classical approach that involves  introducing a sequence of approximate problems. Subsequently, we establish a priori estimates for both the approximate solutions and their gradients in Lorentz-Marcinkiewicz spaces, thereby providing an estimate in $L^1(\mathcal{W})$ for the singular term. Next, we prove an energy estimate, which constitutes a crucial element for the subsequent stages of the proof. Moreover, in the fourth step of the proof, we establish the a.e. convergence of gradient in $\mathcal{W}$ by proving Lemma \ref{t2}. Finally, we pass to the limit in the approximate problem.

$\bullet$ \textbf{First Step: Approximate problem} 

Let's introduce a regularization of the data as follows: for a fixed $\varepsilon > 0, $ let's define
\begin{equation}\label{aapr}
b_\varepsilon(x,\eta,\xi)=b(x,\mathcal{T}_{\frac{1}{\varepsilon}}(\eta),\xi)\,\,\,\,\forall \eta\in \R,
\end{equation}
\begin{equation}\label{phap}
\mathcal{B}_\varepsilon(x,\eta)=\mathcal{B}(x,\mathcal{T}_{\frac{1}{\varepsilon}}(\eta))\,\,\,\,\forall \eta\in \R,
\end{equation}
\begin{equation}\label{cvdf}
f^\varepsilon=\mathcal{T}_{\frac{1}{\varepsilon}}(f)\;\;\; \text{and}\;f^\varepsilon \rightarrow f  \;\;\text{strongly in}\; L^m(\mathcal{W}).
\end{equation}
Observe that
\begin{equation}\label{m1m}
\begin{aligned}
\ds\vert b_\varepsilon(x,\eta,\xi)\vert&=\vert b(x,\mathcal{T}_{\frac{1}{\varepsilon}}(\eta),\xi)\vert\\
&\leq \mathcal{C}(G(x)+\vert \mathcal{T}_{\frac{1}{\varepsilon}}(\eta) \vert^{p-1}+ \vert \xi\vert^{p-1}),\\
&\leq \mathcal{C}(G(x)+\frac{1}{\varepsilon^{p-1} }+ \vert \xi\vert^{p-1})\in L^{p^\prime}(\mathcal{W}),
\end{aligned}
\end{equation}
and
\begin{equation}\label{m2m}
\ds\begin{aligned}
 b_\varepsilon(x,\eta,\xi)\xi=b(x,\mathcal{T}_{\frac{1}{\varepsilon}}(\eta),\xi)\xi
\geq \frac{\alpha\vert \xi \vert^p}{(1+\frac{1}{\varepsilon})^{\theta(p-1)}}
\geq \widetilde{\alpha} \vert \xi \vert^p,
\end{aligned}
\end{equation}
moreover
\begin{equation}\label{m3m}
\begin{aligned}
\ds\vert \mathcal{B}_\varepsilon(x,\eta)\vert=\vert\mathcal{B}(x,\mathcal{T}_{\frac{1}{\varepsilon}}(\eta))\vert\leq \frac{c_0(x)}{\varepsilon^\lambda}\in L^{p^\prime}(\mathcal{W}).
\end{aligned}
\end{equation}
Let $v_\varepsilon \in W_0^{1,p}(\mathcal{W})$ be a weak solution to the following approximate problem 
\small{
\begin{equation}\label{problemapr}
\begin{aligned}
\begin{cases}
-div(b_\varepsilon(x,v_\varepsilon,\nabla v_\varepsilon)+\mathcal{B}_\varepsilon(x,v_\varepsilon))=\gamma \displaystyle\frac{\vert \mathcal{T}_{\frac{1}{\varepsilon}}(v_\varepsilon)\vert ^{s-1}\mathcal{T}_{\frac{1}{\varepsilon}}(v_\varepsilon)}{\vert x\vert ^p+\varepsilon}+f^\varepsilon & \text{in}\, \mathcal{W}, \\
v_\varepsilon=0 &\text{on} \, \partial \mathcal{W},
\end{cases}
\end{aligned}
\end{equation}}
in the sens that
\begin{equation}\label{vpap}
\begin{aligned}
\displaystyle\int_{\mathcal{W}} b_\varepsilon(x, v_\varepsilon, \nabla v_\varepsilon)  \nabla \varphi \,dx 
+\displaystyle\int_{\mathcal{W}} \mathcal{B}_\varepsilon(x, v_\varepsilon)  \nabla \varphi \,dx 
&=\gamma \displaystyle\int_{\mathcal{W}} \frac{\vert \mathcal{T}_{\frac{1}{\varepsilon}}(v_\varepsilon)\vert ^{s-1}\mathcal{T}_{\frac{1}{\varepsilon}}(v_\varepsilon)}{\vert x\vert ^p+\varepsilon}\varphi \,dx\\& + \int_{\mathcal{W}} f^\varepsilon  \varphi\,dx
\end{aligned}
\end{equation}
$\forall \varphi\in W_0^{1,p}(\mathcal{W})\cap L^{\infty}(\mathcal{W}).$

According to \eqref{m1m}, \eqref{m2m} and \eqref{m3m} the existence of a solution $v_\varepsilon$ of $(\ref{problemapr})$ is a well-known result $($see, e.g.,\cite{2}$).$

$\bullet$ \textbf{Second step: A priori estimates} 

In this step we deal with the approximate problem \eqref{problemapr}. We begin by proving some a priori estimates on  $\widetilde{\varrho}(v_\varepsilon),$ $ \nabla \widetilde{\varrho}(v_\varepsilon),$ $v_\varepsilon,$ and $\nabla v_\varepsilon$.
\begin{proposition}\label{pr1}
Suppose that $ f\in L^m(\mathcal{W})$ with 
$ 1\leq m<m_\theta.$
Under the assumptions $\eqref{b3}-\eqref{b7}$ and 
 if $ \lambda<(1-\theta)(p-1),$ $ s<\frac{p(1-\theta)(p-1)}{p^*}$ and $\gamma\geq0$. Then, every weak solution of the problem $\eqref{vpap}$ satisfies
\begin{equation}\label{esta2}
\ds\left\Vert  \vert \widetilde{\varrho}(v_\varepsilon) \vert ^{p-1} \right\Vert _{L^{\frac{N}{N-p},\infty}(\mathcal{W})}\leq c_1\left(N,p,\alpha,meas(\mathcal{W}),c_0\right)
\end{equation}
\begin{equation}\label{esta1}
\ds\left\Vert  \vert \nabla \widetilde{\varrho}(v_\varepsilon) \vert ^{p-1} \right\Vert _{L^{\frac{N}{N-1},\infty}(\mathcal{W})}\leq c_2\left(N,p,\alpha,meas(\mathcal{W}),c_0\right)
\end{equation}
\begin{equation}\label{est2}
\ds\left\Vert  \vert  v_\varepsilon \vert ^{(1-\theta)(p-1)} \right\Vert _{L^{\frac{N}{N-p},\infty}(\mathcal{W})}\leq c_3\left(N,p,\alpha,meas(\mathcal{W}),c_0\right)
\end{equation}
\begin{equation}\label{est1}
\ds\left\Vert  \vert \nabla v_\varepsilon \vert ^{(1-\theta)(p-1)} \right\Vert _{L^{\frac{N}{N-1-\theta(p-1)},\infty}(\mathcal{W})}\leq c_4\left(N,p,\alpha,meas(\mathcal{W}),c_0\right)
\end{equation}
for some positive constants $c_1,\;c_2,\;c_3$ and $c_4$.
\end{proposition}
\begin{proof}
We will divide the proof of this proposition into two steps. Initially, we will establish that our sequence of weak solutions $v_\varepsilon$ is almost everywhere finite in $\mathcal{W}$. Following this outcome, we will derive uniform bounds for $\mathcal{T}_{k}(\widetilde{\varrho}(v_\varepsilon))$ and $\mathcal{T}_{k}(v_\varepsilon)$ in Lebesgue spaces.  Let us mention that throughout the paper $C_i,\; i\in \N^*,$ denotes positive constants independent of $\varepsilon$ that are different from line to line. At last, for any measurable set $D\subset \R^N$, $D^c$ denotes its complement.

\textit{$\bullet$ Step 1: $v_\varepsilon$ is finite a.e. in $\mathcal{W}$.}

 This step is devoted to establish that assuming conditions $(\ref{b3}),$ $(\ref{b7}),$ and $f\in L^1(\mathcal{W})$, the sequence of weak solution satisfies 
\begin{equation}\label{aee}
 \forall \eta >0, \exists k_\eta>0 \;\;\; meas\lbrace\vert v_\varepsilon \vert> k_\eta \rbrace\leq \frac{1}{\eta^p},\quad \text{uniformaly w.r.t  $\varepsilon$.}
\end{equation}
 To this aim, let us consider the real valued function $\psi_p: \R\rightarrow \R$  defined by
\begin{equation*}
\psi_p(t)=\int_0^t \frac{dr}{(\beta_\varepsilon ^{\frac{1}{p-1}}+\vert r\vert )^p},
\end{equation*}
where $\beta_\varepsilon>1,$ is a suitably chosen parameter. 

We use $\psi_p(\widetilde{\varrho}(v_\varepsilon))$ as
test function in $(\ref{vpap})$, we get
\begin{equation}\label{vpe}
\begin{aligned}
\ds\int_{\mathcal{W}}&b_\varepsilon(x,v_\varepsilon,\nabla v_\varepsilon)  \nabla \psi_p(\widetilde{\varrho}(v_\varepsilon))+\int_{\mathcal{W}} \mathcal{B}_\varepsilon(x, v_\varepsilon)  \nabla \psi_p(\widetilde{\varrho}(v_\varepsilon))\\&
= \gamma \int_{\mathcal{W}} \displaystyle\frac{\left\vert \mathcal{T}_{\frac{1}{\varepsilon}}(v_\varepsilon)\right\vert ^{s-1}\mathcal{T}_{\frac{1}{\varepsilon}}(v_\varepsilon)}{\vert x\vert ^p+\varepsilon}\psi_p(\widetilde{\varrho}(v_\varepsilon))+\int_{\mathcal{W}} f^\varepsilon \psi_p(\widetilde{\varrho}(v_\varepsilon)) 
.
\end{aligned}
\end{equation}
Using the assumptions $\eqref{b3}$ and $\eqref{b7}$, we obtain
\begin{align*}
 \displaystyle\alpha\int_{\mathcal{W}}   \frac{\vert \nabla \widetilde{\varrho}(v_\varepsilon)\vert ^p}{(\beta_\varepsilon ^{\frac{1}{p-1}}+\vert \widetilde{\varrho}(v_\varepsilon)\vert )^p}\;dx
&\leq 
\ds\displaystyle\int_{\mathcal{W}}   c_0(x)\ds\frac{\vert  v_\varepsilon\vert ^\lambda}{(\beta_\varepsilon ^{\frac{1}{p-1}}+\vert \widetilde{\varrho}(v_\varepsilon)\vert )^{p-1}} \ds\frac{\vert \nabla \widetilde{\varrho}(v_\varepsilon)\vert }{(\beta_\varepsilon ^{\frac{1}{p-1}}+\vert \widetilde{\varrho}(v_\varepsilon)\vert )}\;dx\\&
 +\ds\frac{1}{\beta_\varepsilon (p-1)}M_\varepsilon,
\end{align*}
where  
\begin{equation*}
M_\varepsilon=\gamma  \displaystyle\int_\mathcal{W} \dfrac{\left\vert \mathcal{T}_{\frac{1}{\varepsilon}}( v_\varepsilon)\right\vert ^s\;dx}{\vert x\vert ^p+\varepsilon}+\big\Vert  f^\varepsilon \big\Vert _{{L}^1(\mathcal{W})}.
\end{equation*}
%Since,
%\begin{equation*}
%\frac{\vert  s\vert ^\lambda}{(\beta_\varepsilon ^{\frac{1}{p-1}}+\vert \widetilde{\varrho}(s)\vert )^{p-1}}\approx \ds\vert s\vert ^{\lambda-(1-\theta )(1-p)} \;\;\; \text{as} \; \vert s\vert\rightarrow +\infty,
%\end{equation*}
 Thanks to \eqref{jjj}, for any $\lambda<(1-\theta )(p-1)$, we have
 \begin{equation*}
 \ds\frac{\vert  v_\varepsilon\vert ^\lambda}{(\beta_\varepsilon ^{\frac{1}{p-1}}+\vert \widetilde{\varrho}(v_\varepsilon)\vert )^{p-1}}\leq \widetilde{C}.
 \end{equation*}
Thus, applying Young's inequality, we obtain
\begin{align*}
 \ds\int_{\mathcal{W}}   \frac{\vert \nabla \widetilde{\varrho}(v_\varepsilon)\vert ^p}{(\beta_\varepsilon ^{\frac{1}{p-1}}+\vert \widetilde{\varrho}(v_\varepsilon)\vert )^p}\;dx
&\leq  C_1+\frac{p^\prime}{\alpha\beta_\varepsilon (p-1)}M_\varepsilon.
\end{align*}
%where $C=\ds\frac{\widetilde{C}_1 p^{\frac{ p^\prime}{p}}}{\alpha^{p^\prime}}.$ 
Now, if we choose  $\beta_\varepsilon =1+\ds\frac{p^\prime}{\alpha(p-1)}M_\varepsilon,$  we deduce that
\begin{align*}
 \displaystyle\int_{\mathcal{W}}   \frac{\vert \nabla \widetilde{\varrho}(v_\varepsilon)\vert ^p}{(\beta_\varepsilon ^{\frac{1}{p-1}}+\vert \widetilde{\varrho}(v_\varepsilon)\vert )^p}\;dx
&\leq  C_1+1,
\end{align*} 
 which implies, applying Poincaré's inequality and for any $h>0,$ that 
\begin{align*}
meas\left\lbrace\vert \widetilde{\varrho}(v_\varepsilon)\vert > h\beta_\varepsilon ^{\frac{1}{p-1}}\right\rbrace&=\frac{1}{[ln(1+h)]^p}\displaystyle\int_{\left\lbrace\vert \widetilde{\varrho}(v_\varepsilon)\vert > h\beta_\varepsilon ^{\frac{1}{p-1}}\right\rbrace} \left[ln(1+h)\right]^p\;dx,\\
&\leq \frac{1}{[ln(1+h)]^p}\displaystyle\int_{\left\lbrace\vert \widetilde{\varrho}(v_\varepsilon)\vert > h\beta_\varepsilon ^{\frac{1}{p-1}}\right\rbrace} \left[ln\left(1+\frac{\vert \widetilde{\varrho}(v_\varepsilon)\vert }{\beta_\varepsilon ^{\frac{1}{p-1}}}\right)\right]^p\;dx,\\
&\leq \frac{1}{[ln(1+h)]^p}\displaystyle\int_{\mathcal{W}} \left[ln\left(1+\frac{\vert \widetilde{\varrho}(v_\varepsilon)\vert }{\beta_\varepsilon ^{\frac{1}{p-1}}}\right)\right]^p \;dx,\\
&\leq \frac{A}{[ln(1+h)]^p},
\end{align*}
where 
\begin{equation*}\label{A}
A=c(N,p)\left(1+C_1\right).
\end{equation*}
Then, for any $\eta > 0,$ we have
\begin{equation}\label{ae}
\ds meas \left\lbrace \vert \widetilde{\varrho}(v_\varepsilon)\vert > \sigma_\eta(\varepsilon) \right\rbrace\leq \ds\frac{1}{\eta^p},
\end{equation}
where 
\begin{equation}\label{sigmaa}
\ds\sigma_\eta(\varepsilon)=\left(\exp\left(\eta A^{\frac{1}{p}}\right)-1\right)\beta_\varepsilon ^{\frac{1}{p-1}}.
\end{equation}
\begin{remark}
Notice that, from \eqref{sigmaa} and recalling the definition of $\beta_\varepsilon$, we can express $\sigma_\eta$ as follows
\begin{equation*}
\sigma_\eta(\varepsilon)=\left(\exp\left(\eta A^{\frac{1}{p}}\right)-1\right)\left(1+\ds\frac{p^\prime}{\alpha(p-1)}M_\varepsilon \right)^{\frac{1}{p-1}}.
\end{equation*}
%this means that $\sigma_\eta\equiv \sigma_\eta(\varepsilon)$ and $k_\eta\equiv k_\eta(\varepsilon)$. 
Therefore, it is crucial to emphasize the need to establish the boundedness of the term $M_\varepsilon$ uniformly with respect to $\varepsilon$ to prove that $\widetilde{\varrho}(v_\varepsilon)$ is finite almost everywhere in $\mathcal{W}$.
Referring back to the definition of $M_\varepsilon$, we can deduce the following inequality
\begin{equation*}
M_\varepsilon\leq \gamma \left\Vert \frac{\left\vert \mathcal{T}_{\frac{1}{\varepsilon}}(v_\varepsilon)\right\vert ^{s}}{\vert x\vert ^p} \right\Vert_{L^1(\mathcal{W})}+\left\Vert  f^\varepsilon \right\Vert _{L^1(\mathcal{W})}.
\end{equation*}
To derive the desired estimate, our task is to prove the boundedness of the Hardy potential term in $L^1(\mathcal{W})$. To this aim, we adopt the approach outlined in \cite{22}.
\end{remark}
 For any given $k \geq 0$, let  $\mathcal{T}_{k}(\widetilde{\varrho}(v_\varepsilon))$ be chosen as a test function in \eqref{vpap}. This yields 
\begin{align*}
\ds \int_\mathcal{W} & b_\varepsilon(x,v_\varepsilon,\nabla v_\varepsilon) \nabla \mathcal{T}_{k}(\widetilde{\varrho}( v_\varepsilon))\;dx+\displaystyle \int_\mathcal{W} \mathcal{B}_\varepsilon(x,v_\varepsilon) \nabla \mathcal{T}_{k}(\widetilde{\varrho}( v_\varepsilon))\;dx\\&= \gamma \displaystyle\int_{\mathcal{W}} \frac{\vert \mathcal{T}_{\frac{1}{\varepsilon}}(v_\varepsilon)\vert ^{s-1}\mathcal{T}_{\frac{1}{\varepsilon}}(v_\varepsilon)}{\vert x\vert ^p+\varepsilon}\mathcal{T}_{k}(\widetilde{\varrho}( v_\varepsilon))\;dx+\int_{\mathcal{W}} f^\varepsilon  \mathcal{T}_{k}(\widetilde{\varrho}( v_\varepsilon))\;dx.  
\end{align*}
Using  $\eqref{b3}$ and  $\eqref{b7}$,  we get
\begin{align*}
\alpha \displaystyle \int_\mathcal{W} \big\vert  \nabla \mathcal{T}_{k}(\widetilde{\varrho}( v_\varepsilon)) \big\vert ^p \;dx & 
\leq
\displaystyle \int_{\mathcal{W}} c_0(x)\vert v_\varepsilon\vert ^\lambda\vert \nabla \mathcal{T}_{k}(\widetilde{\varrho}( v_\varepsilon))\vert \; dx+kM_\varepsilon,\\
&\leq \ds\int_{\mathcal{W}} c_0(x)\frac{\vert v_\varepsilon \vert^\lambda \vert\nabla \mathcal{T}_{k}(\widetilde{\varrho}( v_\varepsilon))\vert}{\left(1+\vert \widetilde{\varrho}( v_\varepsilon)\vert\right)^{p-1}}\left(1+\vert \widetilde{\varrho}( v_\varepsilon)\vert\right)^{p-1} \;dx\\&+k M_\varepsilon,
\end{align*}
Therefore, through \eqref{jjj} and employing Hölder's, Young's and Sobolev's inequalities, we obtain 
\begin{align*}
\alpha \displaystyle \int_\mathcal{W} \big\vert  \nabla \mathcal{T}_{k}(\widetilde{\varrho}( v_\varepsilon)) \big\vert ^p \;dx & 
\leq \widetilde{C}\ds\int_{\mathcal{W}} c_0(x)\vert \widetilde{\varrho}( v_\varepsilon)\vert^{p-1} \vert\nabla \mathcal{T}_{k}(\widetilde{\varrho}( v_\varepsilon))\vert\;dx\\&+
\widetilde{C}\ds\int_{\mathcal{W}} c_0(x) \vert\nabla \mathcal{T}_{k}(\widetilde{\varrho}( v_\varepsilon))\vert\;dx+kM_\varepsilon,\\
&\leq  \widetilde{C}\displaystyle \int_{Z_{\eta,\varepsilon}^c} c_0(x)\vert \widetilde{\varrho}( v_\varepsilon)\vert ^{p-1}\vert \nabla \mathcal{T}_{k}(\widetilde{\varrho}( v_\varepsilon))\vert \; dx\\&+ \widetilde{C}\displaystyle \int_{Z_{\eta,\varepsilon}} c_0(x)\vert \widetilde{\varrho}( v_\varepsilon)\vert ^{p-1}\vert \mathcal{T}_{k}(\widetilde{\varrho}( v_\varepsilon))\vert \; dx\\&
+C_2+\frac{\alpha}{2p}\displaystyle \int_{\mathcal{W}} \vert \nabla \mathcal{T}_{k}(\widetilde{\varrho}( v_\varepsilon))\vert ^p\; dx+kM_\varepsilon,\\
&\leq \widetilde{C} \mathcal{S}^{p-1}\big\Vert  c_0(x)\big\Vert _{L^{\frac{N}{p-1}}(Z_{\eta,\varepsilon}^c)}\int_{\mathcal{W}} \vert \nabla \mathcal{T}_{k}(\widetilde{\varrho}( v_\varepsilon))\vert ^p\; dx \\
&+\frac{\alpha}{p}\int_{\mathcal{W}} \vert \nabla \mathcal{T}_{k}(\widetilde{\varrho}( v_\varepsilon))\vert ^p\; dx+C_\varepsilon+kM_\varepsilon,
\end{align*}
where 
\begin{equation*}
Z_{\eta,\varepsilon}=\lbrace x\in \mathcal{W},\vert \widetilde{\varrho}( v_\varepsilon(x))\vert \leq \sigma_\eta(\varepsilon) \rbrace,
\end{equation*}
and
\begin{equation*}
 C_\varepsilon=C_2+\left(\frac{2p}{\alpha} \right)^{\frac{p^\prime}{p}}\frac{\left(\widetilde{C}\sigma_\eta^{p-1}(\varepsilon)\right)^{p^\prime}}{p^\prime}\int_\mathcal{W} \vert c_0(x)\vert^{p^\prime}\;dx,
\end{equation*}
%\quad\text{and}\quad   C_1=\ds\left(1+\sigma_\eta^{p^\prime}\right)\ds\frac{2^{\frac{p^\prime}{p}}\widetilde{C}^{p^\prime}}{\alpha^{\frac{p^\prime}{p}} p^\prime}\Vert  c_0 (x)\Vert _{L^{p^{\prime}}(\mathcal{W})}^{p^\prime}
i.e.,
\begin{align*}
\frac{\alpha}{p^\prime} \displaystyle \int_\mathcal{W} \big\vert  \nabla \mathcal{T}_{k}(\widetilde{\varrho}( v_\varepsilon)) \big\vert ^p \;dx & 
\leq \widetilde{C} \mathcal{S}^{p-1}\big\Vert  c_0(x)\big\Vert _{L^{\frac{N}{p-1}}(Z_{\eta,\varepsilon}^c)}\int_{\mathcal{W}} \vert \nabla \mathcal{T}_{k}(\widetilde{\varrho}( v_\varepsilon))\vert ^p\; dx\\&+C_\varepsilon+kM_\varepsilon.
\end{align*}
%%%%%%%%%%%%%%%%%%%%%%%%%%%%%%%%
Note that, for every  $\varepsilon>0$, we can select $\eta=\bar{\eta},$ in \eqref{sigmaa}, such that
%\begin{equation*}
%meas \left\lbrace \vert \widetilde{\varrho}( v_\varepsilon)\vert >\left(\exp\left(\bar{\eta}A^{\frac{1}{p}}\right)-1\right)\beta_\varepsilon^{\frac{1}{p-1}} \right\rbrace\leq \frac{1}{\bar{\eta}^p}< \tau ,
%\end{equation*}
% which implies that
\begin{equation*}
\ds\frac{p^{\prime}}{\alpha}\widetilde{C}\mathcal{S}^{p-1}\left\Vert  c_0(x)\right\Vert _{ L^{\frac{N}{p-1}}\left(Z_{\bar{\eta},\varepsilon}^c\right)}\leq \frac{1}{2}.
\end{equation*}
As a result, we derive that
\begin{align*}
\displaystyle \int_\mathcal{W} \big\vert  \nabla \mathcal{T}_{k}(\widetilde{\varrho}( v_\varepsilon)) \big\vert ^p \;dx & 
\leq L_\varepsilon +M_\varepsilon^\prime k,
\end{align*}
where
\begin{equation*}
L_\varepsilon=\frac{2 p^\prime}{\alpha}C_\varepsilon  \text{ and } M_\varepsilon^\prime=\frac{2 p^\prime}{\alpha}M_\varepsilon.
\end{equation*}
%%%%%%%%%%%%%%%%%%%%%%%%%%%%%%%%%%%%
By  lemma \ref{lapes}, we get
\begin{equation*}
\left\Vert  \left\vert  \widetilde{\varrho}( v_\varepsilon) \right\vert ^{p-1}\right\Vert _{L^{\frac{N}{N-p},\infty}(\mathcal{W})}\leq C(N,p)\left[M^\prime_\varepsilon+meas(\mathcal{W})^{\frac{1}{p}}L_\varepsilon^{\frac{1}{p^\prime}} \right].
\end{equation*}
 Moreover, using the fact that $\widetilde{\varrho}( v_\varepsilon)$ exhibits behavior similar to $\vert v_\varepsilon \vert^{1-\theta }$ for any $\theta <1$, we  conclude that
\begin{equation*}
\left\Vert  \left\vert  v_\varepsilon \right\vert ^{(1-\theta )(p-1)}\right\Vert _{L^{\frac{N}{N-p},\infty}(\mathcal{W})}\leq C(N,p)\left[M^\prime_\varepsilon+meas(\mathcal{W})^{\frac{1}{p}}L_\varepsilon^{\frac{1}{p^\prime}} \right].
\end{equation*} 
%On the other hand, we have
%\begin{equation*}
%M_\varepsilon\leq \gamma \left\Vert \frac{\left\vert \mathcal{T}_{\frac{1}{\varepsilon}}(v_\varepsilon)\right\vert ^{s}}{\vert x\vert ^p} \right\Vert_{L^1(\mathcal{W})}+\left\Vert  f^\varepsilon \right\Vert _{L^1(\mathcal{W})}.
%\end{equation*}
Given $ s<\frac{p(1-\theta)(p-1)}{p^*}$, and utilizing $\eqref{ceb}$ along with the application of Hölder's inequality (with exponents $\rho=\frac{(1-\theta)(p-1)}{s}$ and $\rho^\prime=\frac{\rho}{\rho-1}$), we obtain
  \begin{align*}  
  \left\Vert \frac{\left\vert \mathcal{T}_{\frac{1}{\varepsilon}}(v_\varepsilon)\right\vert ^{s}}{\vert x\vert ^p} \right\Vert_{L^1(\mathcal{W})}&\leq \left(\displaystyle\int_\mathcal{W} \vert  v_\varepsilon \vert ^{(1-\theta )(p-1)}\;dx\right)^{\frac{1}{\rho}}\left(\displaystyle\int_\mathcal{W} \frac{dx}{\vert  x \vert ^{p\rho^\prime}}\right)^{\frac{1}{\rho^\prime}},\\
  &\leq C_3 \left\Vert  \left\vert  v_\varepsilon \right\vert ^{(1-\theta )(p-1)}\right\Vert _{L^{1}(\mathcal{W})}^{\frac{1}{\rho}},\\
  &\leq C_3\left\Vert  \left\vert  v_\varepsilon \right\vert ^{(1-\theta )(p-1)}\right\Vert _{L^{\frac{N}{N-p},\infty}(\mathcal{W})}^{\frac{1}{\rho}},\\
  & \leq C_3 \left[ C(N,p)\left[M^\prime_\varepsilon+meas(\mathcal{W})^{\frac{1}{p}}L_\varepsilon^{\frac{1}{p^\prime}} \right] \right]^{\frac{s}{p-1}},\\
  &\leq  C_4 M_\varepsilon^{\frac{s}{p-1}}+
  C_5L_\varepsilon^{\frac{s}{p}}.
  \end{align*} 
  %C(N,p)\left(\frac{p^\prime}{\rho}\right)^{\frac{s}{p-1}} oooo 3C(N,p)(\vert \mathcal{W} \vert)^{\frac{s}{p(p-1)}}
  \begin{remark}
%  Note that, when $p=2$ and $c_0(x)=\theta=0$, it is well known that the existence of a solution of \eqref{problem}, with datum in $W^{-1,2}(\mathcal{W})$, can be established if  $\gamma$ is assumed to be sufficiently small (related to the Hardy constant) 
Let us emphasize that we don't know if the value $\frac{p(p-1)(1-\theta)}{p^*}$ is optimal in order to obtain the above estimate. Thus, it would be very interesting to know what happen in the
lacking set $\frac{p(p-1)(1-\theta)}{p^*} \leq s< (p-1)(1-\theta).$ This delicate case, will be dealt with in an upcoming paper, wherein the focus lies on exploring \textit{renormalized solutionss} with a new approach. 
  \end{remark}
%  \begin{remark}
%  Let us emphasize that the potential $\ds \frac{1}{\vert x\vert^p}\in L^{\frac{N}{p},\infty}(\mathcal{W})$. Therefore, the optimal regularity required to prove that the term $\left\Vert \frac{\left\vert \mathcal{T}_{\frac{1}{\varepsilon}}(v_\varepsilon)\right\vert ^{s}}{\vert x\vert ^p} \right\Vert_{L^1(\mathcal{W})}$ is bounded, is that $\ds \vert v_\varepsilon \vert^s\in L^{\frac{N}{N-p},\infty}(\mathcal{W})$
%  \end{remark}
  After easy calculations, we prove that
  \begin{equation*}
  L_\varepsilon=\frac{2 p^\prime}{\alpha}\left(C_2+\left(\frac{2p}{\alpha} \right)^{\frac{p^\prime}{p}}\frac{\left(\widetilde{C}\sigma_\eta^{p-1}(\varepsilon)\right)^{p^\prime}}{p^\prime}\int_\mathcal{W} \vert c_0(x)\vert^{p^\prime}\;dx \right) \leq C_6+C_7 M_\varepsilon^{p^\prime}.
\end{equation*}
Which implies, by applying Young's inequality, that
     \begin{align*}  
  \left\Vert \frac{\left\vert \mathcal{T}_{\frac{1}{\varepsilon}}(v_\varepsilon)\right\vert ^{s}}{\vert x\vert ^p} \right\Vert_{L^1(\mathcal{W})}&\leq C_9+ C_8 M_\varepsilon^{\frac{s}{p-1}},\\   
 &\leq C_{10} +\frac{1}{2\gamma} M_\varepsilon,
  \end{align*} 
thus, we deduce that  
   \begin{equation}\label{bn}
   \left\Vert \frac{\left\vert \mathcal{T}_{\frac{1}{\varepsilon}}(v_\varepsilon)\right\vert ^{s}}{\vert x\vert ^p} \right\Vert_{L^1(\mathcal{W})}\leq C_{11}.
   \end{equation}   
 So that there exists a positive constant $\bar{\sigma}_\eta$, independent of $\varepsilon$, such that
 \begin{align*}
\sigma_\eta(\varepsilon)\leq \left(\exp\left(\eta A^{\frac{1}{p}}\right)-1\right)\left(1+\ds\frac{p^\prime}{\alpha(p-1)}\left(\gamma C_{11}+\Vert f\Vert_{L^1(\mathcal{W})} \right) \right)^{\frac{1}{p-1}}= \bar{\sigma}_\eta.
\end{align*}
Observe that, by the definition of $\bar{\sigma}_\eta$, it results 
\begin{equation*}
\lim_{\eta\rightarrow +\infty}\bar{\sigma}_\eta=+\infty.
\end{equation*}
 Moreover,  since $meas \left\lbrace \vert \widetilde{\varrho}(v_\varepsilon)\vert > \bar{\sigma}_\eta \right\rbrace \leq meas \left\lbrace \vert \widetilde{\varrho}(v_\varepsilon)\vert > \sigma_\eta(\varepsilon) \right\rbrace,$
and recalling \eqref{ae}, we affirm that $\widetilde{\varrho}(v_\varepsilon)$ is finite a.e. in $\mathcal{W}.$ 
Therefore, since $\widetilde{\varrho}$ is $C^1$ non decreasing function, there exists $k_\eta>0$ such that
\begin{equation*}
\ds meas \left\lbrace \vert v_\varepsilon\vert > k_\eta \right\rbrace\leq \frac{1}{\eta^p}.
\end{equation*}
This implies, since $\widetilde{\varrho}(\pm \infty)=\pm \infty$, that $v_\varepsilon$ is also finite a.e. in $\mathcal{W}$. Which is equivalent to \eqref{aee}.

\textit{$\bullet$ Step 2: $\mathcal{T}_{k}(\widetilde{\varrho}( v_\varepsilon))$ and $\mathcal{T}_{k}(v_\varepsilon)$  are bounded in $W_0^{1,p}(\mathcal{W})$.}
 
 Choosing $\mathcal{T}_{k}(\widetilde{\varrho}( v_\varepsilon))$ as test function in \eqref{vpap}, and following the same reasoning as before, we obtain
   \begin{equation}\label{alphauepsilon}
\displaystyle \int_\mathcal{W} \left\vert  \nabla \mathcal{T}_{k}(\widetilde{\varrho}( v_\varepsilon)) \right\vert ^p \;dx \leq L+Mk \quad\quad \forall k>0,
\end{equation}
where
\begin{equation*}
 L=C_6+C_8\left(\gamma C_{11}+\Vert  f \Vert _{L^1(\mathcal{W})}\right) \text{ and }M=\ds\frac{2 p^\prime}{\alpha}\left(\gamma C_{11}+\Vert  f \Vert _{L^1(\mathcal{W})}\right).
\end{equation*}
\begin{remark}
We note that, in this step, we have employed the measurable set
\begin{equation}\label{ZZ}
\bar{Z}_{\eta,\varepsilon}=\lbrace x\in \mathcal{W},\vert \widetilde{\varrho}( v_\varepsilon(x))\vert \leq \bar{\sigma}_\eta \rbrace,
\end{equation}
instead of the set $Z_{\eta,\varepsilon}.$
\end{remark}
The  estimate \eqref{alphauepsilon}, together with the lemma \ref{lapes}, enables us to deduce \eqref{esta2} and \eqref{esta1}.

Taking $\mathcal{T}_{k}(v_\varepsilon)$ as test function in \eqref{vpap}, we obtain
\begin{align*}
\displaystyle \int_\mathcal{W} & b_\varepsilon(x,v_\varepsilon,\nabla v_\varepsilon) \nabla \mathcal{T}_{k}(v_\varepsilon)\;dx+\displaystyle \int_\mathcal{W} \mathcal{B}_\varepsilon(x,v_\varepsilon) \nabla \mathcal{T}_{k}(v_\varepsilon)\;dx\\&= \gamma \displaystyle\int_{\mathcal{W}} \frac{\vert \mathcal{T}_{\frac{1}{\varepsilon}}(v_\varepsilon)\vert ^{s-1}\mathcal{T}_{\frac{1}{\varepsilon}}(v_\varepsilon)}{\vert x\vert ^p+\varepsilon}\mathcal{T}_{k}(v_\varepsilon)\;dx
+\int_{\mathcal{W}} f^\varepsilon  \mathcal{T}_{k}(v_\varepsilon)\;dx.
\end{align*}
By $\eqref{b3},$ $\eqref{b7},$ and utilizing Hölder's, Young's, and Sobolev's inequalities, we get
\begin{align*}
\frac{\alpha}{p^\prime} I_k 
 \leq C_{12}\Vert c_0(x)\Vert_{L^{\frac{N}{p-1}}(\mathcal{E}_{\eta,\varepsilon})}(1+k)^{\theta(p-1)}I_k
+ C_{13}(1+k)^\theta+\frac{\alpha}{p}I_k +M_\varepsilon k,
\end{align*}
where, for every $k_\eta>0$,
\begin{equation*}
I_k=\int_\mathcal{W} \frac{\vert  \nabla \mathcal{T}_{k}(v_\varepsilon) \vert ^p}{(1+\vert \mathcal{T}_{k}(v_\varepsilon) \vert)^{\theta (p-1)}}  \;dx, \text{ and } \mathcal{E}_{\eta,\varepsilon}=\lbrace x\in \mathcal{W},\vert  v_\varepsilon(x)\vert > k_\eta \rbrace.
\end{equation*}
Now, we choose $k_{\eta}$ (for fixed $\eta=\bar{\eta}$) such that
\begin{equation*}
\frac{p^\prime}{\alpha}C_{12}\Vert c_0(x)\Vert_{L^{\frac{N}{p-1}}(\mathcal{E}_{\bar{\eta},\varepsilon})}(1+k)^{\theta(p-1)}\leq \frac{1}{2}.
\end{equation*}
Thus, for every $k>0$, we obtain
\begin{align*}
\ds I_k \leq C_{13}(1+k)^{\theta } +C_{14} k,
\end{align*}
which implies that
\begin{align*}
\ds \int_\mathcal{W} \vert \nabla \mathcal{T}_{k}(v_\varepsilon)\vert^p\;dx&\leq (1+k)^{\theta(p-1)}I_k,\\&
\leq C_{13}(1+k)^{\theta p}+C_{14} k(1+k)^{\theta(p-1)}.
\end{align*}
Noting that, for a suitable constant $\mathcal{C}$ depending on $p$ and $\theta$, we have
\begin{equation*}
k(1+k)^{\theta(p-1)} \leq \mathcal{C}\left(1+k^{\theta(p-1)+1}\right) \quad\quad \forall\, k > 0.
\end{equation*}
So that
\begin{align*}
\ds \int_\mathcal{W} \vert \nabla \mathcal{T}_{k}(v_\varepsilon)\vert^p\;dx&\leq C_{13}(1+k)^{\theta p}+C_{15}+C_{16} k^{\theta(p-1)+1}.
\end{align*}
Finally, being $\theta \in (0,1)$, we use Young's inequality with the exponents 
\begin{equation*}
\kappa=\ds\frac{\theta(p-1)+1}{\theta p} \quad\text{and}\quad \kappa^\prime=\ds\frac{\kappa}{\kappa-1},
\end{equation*}
 we obtain
\begin{equation}\label{uepsilon}
\displaystyle \int_\mathcal{W} \vert  \nabla \mathcal{T}_{k}(v_\varepsilon) \vert ^p \;dx 
\leq L+Mk^{\theta (p-1)+1},\quad \forall k>0,
\end{equation}
where 
\begin{equation*}
\ds M= C_{16} +\frac{1}{\kappa}\quad\text{and}\quad  \ds L=C_{13}+C_{15}+\frac{C_{13}^{\kappa^\prime}}{\kappa^\prime}.
\end{equation*}
Bearing in mind that  $\theta<1$ and the estimate \eqref{uepsilon}, we can apply Lemma \ref{lapes} to extrapolate that the estimates \eqref{est2} and \eqref{est1} are hold. So, the proof of the proposition \ref{pr1}  is now complete.
\end{proof}
Based on the arguments in \cite{38,28}, it can be inferred from estimates \eqref{alphauepsilon} and \eqref{uepsilon} that, for a subsequence which is still indexed by $\varepsilon$, the following convergences hold:
\begin{align}
&v_{\varepsilon} \rightarrow v \text { a.e. in } \mathcal{W},\label{ucv} \\
&\widetilde{\varrho}( v_\varepsilon) \rightarrow \widetilde{\varrho}(v) \text { a.e. in } \mathcal{W},\label{altcv} \\
&\mathcal{T}_{k}\left(v_{\varepsilon}\right) \rightharpoonup \mathcal{T}_{k}(v) \text { weakly in }  W_{0}^{1, p}(\mathcal{W}),\label{tkcv} \\
&\mathcal{T}_{k}\left(\widetilde{\varrho}( v_\varepsilon)\right) \rightharpoonup \mathcal{T}_{k}(\widetilde{\varrho}(v) ) \text { weakly in }  W_{0}^{1, p}(\mathcal{W}),\label{tkaltcv} \\
&b_\varepsilon\left(x, v_\varepsilon, \nabla v_{\varepsilon}\right) \chi_{\left\{\left\vert u_{\varepsilon}\right\vert  \leq k\right\}} \rightharpoonup \sigma_{k} \text { weakly in }(L^{p^{\prime}}(\mathcal{W}))^{N},\label{sigma}
%&\mathcal{B}_\varepsilon(x,v_\varepsilon)\rightarrow \mathcal{B}(x,u) \text { strongly in }(L^{p^{\prime}}(\mathcal{W}))^{N}.
\end{align}

\textbf{$\bullet$ Third step: Energy formula}

Now we look for the following energy estimate of the approximating solutions $v_\varepsilon$
\begin{equation}\label{energy formula}
\lim_{n\rightarrow +\infty}\limsup_{\varepsilon \rightarrow 0}\frac{1}{n}\displaystyle\int_{\lbrace \vert \widetilde{\varrho}(v_\varepsilon)\vert \leq n\rbrace}\varrho(v_\varepsilon) b_\varepsilon(x,v_\varepsilon,\nabla v_\varepsilon)\nabla v_\varepsilon\;dx=0.
\end{equation}
Taking $v=\ds\frac{1}{n}\mathcal{T}_{n}(\widetilde{\varrho}( v_\varepsilon))$ as test function in $\eqref{vpap}$ yields that
 \small{
 \begin{align*}
 &\frac{1}{n}\displaystyle \int_\mathcal{W}\left( b_\varepsilon(x,v_\varepsilon,\nabla v_\varepsilon)+\mathcal{B}_\varepsilon(x,v_\varepsilon)\right) \nabla \mathcal{T}_{n}(\widetilde{\varrho}( v_\varepsilon))\;dx\\&=\frac{\gamma}{n} \displaystyle\int_{\mathcal{W}} \frac{\left\vert \mathcal{T}_{\frac{1}{\varepsilon}}(v_\varepsilon)\right\vert ^{s-1}\mathcal{T}_{\frac{1}{\varepsilon}}(v_\varepsilon)}{\vert x\vert ^p+\varepsilon}\mathcal{T}_{n}(\widetilde{\varrho}( v_\varepsilon))\;dx+\frac{1}{n} \int_{\mathcal{W}} f^\varepsilon  \mathcal{T}_{n}(\widetilde{\varrho}( v_\varepsilon))\;dx.
 \end{align*} }
 Utilizing $\eqref{jjj},$ $\eqref{b3},$ $\eqref{b7},$  as well as Hölder's, Young's, and Sobolev's inequalities, we obtain
 \small{ 
 \begin{align*}
 \frac{\alpha}{n}\displaystyle \int_\mathcal{W} \vert  \nabla \mathcal{T}_{n}(\widetilde{\varrho}( v_\varepsilon)) \vert ^p\;dx &\leq \ds\frac{1}{n} \int_{\mathcal{W}} c_0(x)\vert v_\varepsilon \vert^\lambda \vert \nabla \mathcal{T}_{n}(\widetilde{\varrho}( v_\varepsilon)) \vert\;dx+\frac{\gamma}{n} \displaystyle\int_{\mathcal{W}} \frac{\vert v_\varepsilon\vert ^{s}}{\vert x\vert ^p}\mathcal{T}_{n}(\widetilde{\varrho}(v_\varepsilon))\;dx\\&+\frac{1}{n} \int_{\mathcal{W}} f^\varepsilon  \mathcal{T}_{n}(\widetilde{\varrho}( v_\varepsilon))\;dx\\
 & \leq \ds\frac{1}{n}\int_{\mathcal{W}} c_0(x)\frac{\vert v_\varepsilon \vert^\lambda}{\left(1+\vert \widetilde{\varrho}( v_\varepsilon)\vert\right)^{p-1}}\left(1+\vert \widetilde{\varrho}( v_\varepsilon)\vert\right)^{p-1} \vert\nabla \widetilde{\varrho}( v_\varepsilon)\vert\;dx\\
 &+\frac{\gamma}{n} \displaystyle\int_{\mathcal{W}} \frac{\vert v_\varepsilon\vert ^{s}}{\vert x\vert ^p}\mathcal{T}_{n}(\widetilde{\varrho}(v_\varepsilon))\;dx+\frac{1}{n} \int_{\mathcal{W}} f^\varepsilon  \mathcal{T}_{n}(\widetilde{\varrho}( v_\varepsilon))\;dx\\
 &\leq \frac{C_{17}}{n}+\frac{\alpha}{np}\int_\mathcal{W} \vert  \nabla \mathcal{T}_{n}(\widetilde{\varrho}( v_\varepsilon)) \vert ^p\;dx\\&+\frac{1}{n}2^{p-2}\widetilde{C}\mathcal{S}^{p-1}\big\Vert  c_0(x)\big\Vert _{L^{\frac{N}{p-1}}(\bar{Z}_{\eta,\varepsilon}^c)}\int_{\mathcal{W}} \vert \nabla \mathcal{T}_{k}(\widetilde{\varrho}( v_\varepsilon))\vert ^p\; dx\\
 &+\frac{\gamma}{n} \displaystyle\int_{\mathcal{W}} \frac{\vert v_\varepsilon\vert ^{s}}{\vert x\vert ^p}\mathcal{T}_{n}(\widetilde{\varrho}(v_\varepsilon))\;dx+\frac{1}{n} \int_{\mathcal{W}} f^\varepsilon  \mathcal{T}_{n}(\widetilde{\varrho}( v_\varepsilon))\;dx.
 \end{align*}}
with $\bar{Z}_{\eta,\varepsilon}$ is the measurable set defined by \eqref{ZZ}.

By choosing $\eta=\overline{\eta}$ (since $\widetilde{\varrho}( v_\varepsilon)$ is finite a.e. in $\mathcal{W}$), such that
% \begin{equation*}
%meas \left\lbrace \vert \widetilde{\varrho}( v_\varepsilon)\vert >(\bar{\eta}\exp(A^{\frac{1}{p}})-1)\beta ^{\frac{1}{p-1}} \right\rbrace\leq \frac{1}{\bar{\eta}^p}< \tau ,
%\end{equation*}
%and for some  $\tau>0,$ implies that
\begin{equation*}
\ds\frac{2^{p-2}p^{\prime}}{\alpha}\widetilde{C}\mathcal{S}^{p-1}\left\Vert  c_0(x)\right\Vert _{\ds L^{\frac{N}{p-1}}(\bar{Z}_{\bar{\eta},\varepsilon}^c)}\leq \frac{1}{2},
\end{equation*}
  we obtain 
%  by denoting $$C_2=\frac{2^{(p-2)p^\prime}\widetilde{C}^{p^\prime}}{ \alpha^{p^\prime}}\int_\mathcal{W} \vert c_0(x) \vert^{p^\prime}\;dx,$$
%  it yields
 \begin{align*}
 \frac{1}{n}\displaystyle \int_\mathcal{W} \vert  \nabla \mathcal{T}_{n}(\widetilde{\varrho}( v_\varepsilon)) \vert ^p\;dx &\leq \frac{p^\prime C_{17}}{n\alpha}+\frac{\gamma p^\prime}{n \alpha}\displaystyle\int_{\mathcal{W}} \frac{\vert v_\varepsilon\vert ^{s}}{\vert x\vert ^p}\mathcal{T}_{n}(\widetilde{\varrho}(v_\varepsilon))\;dx+\frac{p^\prime}{n \alpha}\int_{\mathcal{W}} f^\varepsilon  \mathcal{T}_{n}(\widetilde{\varrho}( v_\varepsilon))\;dx.
 \end{align*}
 Now, we claim that
 \begin{equation}\label{f1}
 \lim_{n\rightarrow +\infty}\limsup_{\varepsilon \rightarrow 0}\frac{p^\prime}{n \alpha}\int_{\mathcal{W}} f^\varepsilon  \mathcal{T}_{n}(\widetilde{\varrho}( v_\varepsilon))\;dx=0,
 \end{equation}
% \begin{equation}\label{f2}
% \lim_{n\rightarrow +\infty}\limsup_{\varepsilon \rightarrow 0}\displaystyle \frac{1}{n}\int_\mathcal{W} b_\varepsilon(x,v_\varepsilon,\nabla v_\varepsilon) \nabla T_{\sigma_\eta}(\widetilde{\varrho}( v_\varepsilon))\;dx=0,
% \end{equation}
 and
 \begin{equation}\label{f3}
 \lim_{n\rightarrow +\infty}\limsup_{\varepsilon \rightarrow 0}\frac{\gamma p^\prime}{n \alpha}\displaystyle\int_{\mathcal{W}} \frac{\vert v_\varepsilon\vert ^{s}}{\vert x\vert ^p}\mathcal{T}_{n}(\widetilde{\varrho}(v_\varepsilon))\;dx=0.
 \end{equation}
 Once this claim is proved and by using \eqref{b3} , it follows that \eqref{energy formula} is hold.
 
 For any $n\in\N^*$ and in view of \eqref{ucv} we have
 \begin{equation}
 \ds \mathcal{T}_{n}(v_\varepsilon)\rightharpoonup \mathcal{T}_{n}(v),\quad \text{weak-* in $L^\infty(\mathcal{W}),$}
 \end{equation}
 which implies, combining with \eqref{cvdf}, that
 \begin{equation*}
 \limsup_{\varepsilon \rightarrow 0}\frac{1}{n}\int_{\mathcal{W}} f^\varepsilon  \mathcal{T}_{n}(\widetilde{\varrho}( v_\varepsilon))\;dx=\frac{1}{n}\int_{\mathcal{W}} f \mathcal{T}_{n}(\widetilde{\varrho}(v))\;dx.
 \end{equation*}
Moreover, due to \eqref{ae}, \eqref{altcv} and Fatou Lemma, we  can easily get that
%\textcolor{red}{
\begin{align*}
meas \left\lbrace \vert \widetilde{\varrho}(v)\vert > \bar{\sigma}_\eta \right\rbrace
&\leq\liminf_{\varepsilon \rightarrow 0}  meas \left\lbrace \vert \widetilde{\varrho}(v_\varepsilon)\vert > \bar{\sigma}_\eta \right\rbrace\leq \ds\frac{1}{\eta^p},
\end{align*}
 thus, it follows  that $\widetilde{\varrho}(v)$ is  finite a.e. in $\mathcal{W}$. In addition, the sequence 
 $\ds\left\lbrace \frac{\mathcal{T}_{n}(\widetilde{\varrho}(v))}{n}\right\rbrace$ converges to 0 a.e. in $\mathcal{W}$ and bounded by 1. 
 %Therefore, applying Lebesgue's dominated convergence theorem, we can conclude that \eqref{f1} holds.
 Hence, applying Lebesgue's dominated convergence theorem leads to \eqref{f1}.
%Using \eqref{sigma} and \eqref{altcv}, we obtain that
%\begin{equation*}
% \limsup_{\varepsilon \rightarrow 0}\displaystyle \frac{1}{n}\int_\mathcal{W} b_\varepsilon(x,v_\varepsilon,\nabla v_\varepsilon) \nabla T_{\sigma_\eta}(\widetilde{\varrho}( v_\varepsilon))\;dx=\displaystyle \frac{1}{n}\int_\mathcal{W} \sigma_k\nabla T_{\sigma_\eta}(\widetilde{\varrho}(v))\;dx,
% \end{equation*}
%therefore by passing to the limit  as $n\rightarrow +\infty$, we get \eqref{f2}.
  
Defining $E$ as a measurable subset of $\mathcal{W}$ containing $0$ and such that the $meas(E)$ is small enough, and according to \eqref{est1} it yields that
\begin{align*}
\displaystyle \int_E \frac{\vert v_\varepsilon\vert ^{s}}{\vert x\vert ^p}dx&\leq \left(\displaystyle\int_\mathcal{W} \vert  v_\varepsilon \vert ^{(1-\theta )(p-1)}\;dx\right)^{\frac{s}{(1-\theta )(p-1)}}\left(\displaystyle\int_E \frac{dx}{\vert  x \vert ^{\frac{p(1-\theta )(p-1)}{(1-\theta )(p-1)-s}}}\right)^{\frac{(1-\theta )(p-1)-s}{(1-\theta )(p-1)}},\\
  &\leq  \left\Vert  \left\vert  v_\varepsilon \right\vert ^{(1-\theta )(p-1)}\right\Vert _{L^{1}(\mathcal{W})}^{\frac{s}{(1-\theta )(p-1)}}\left(\displaystyle\int_E \frac{dx}{\vert  x \vert ^{\frac{p(1-\theta )(p-1)}{(1-\theta )(p-1)-s}}}\right)^{\frac{(1-\theta )(p-1)-s}{(1-\theta )(p-1)}},\\
&\leq C_{18} \left(\displaystyle\int_E \frac{dx}{\vert  x \vert ^{\frac{p(1-\theta )(p-1)}{(1-\theta )(p-1)-s}}}\right)^{\frac{(1-\theta )(p-1)-s}{(1-\theta )(p-1)}}.
\end{align*}
Then, for every $ \delta > 0$ and since $\ds s < \frac{p(1-\theta)(p-1)}{p^*}$, the absolute continuity of the Lebesgue integral allow us to conclude that
\begin{equation*}
\left(\displaystyle\int_E \frac{dx}{\vert  x \vert ^{\frac{p(1-\theta )(p-1)}{(1-\theta )(p-1)-s}}}\right)^{\frac{(1-\theta )(p-1)-s}{(1-\theta )(p-1)}}\leq \delta,
\end{equation*}
therefore, we get 
\begin{equation*}
\displaystyle \int_E \frac{\vert v_\varepsilon\vert ^{s}}{\vert x\vert ^p}dx \leq C_{18}\delta,
\end{equation*}
then, the sequence $\ds\left\lbrace  \frac{\left\vert v_\varepsilon\right\vert ^{s}}{\vert x\vert ^p} \right\rbrace$  is equi-integrable.

This implies, by applying Vitali's theorem,  that
\begin{equation}\label{h1}
\frac{\left\vert v_\varepsilon\right\vert ^{s}}{\vert x\vert ^p}\rightarrow \frac{\vert v\vert ^{s}}{\vert x\vert ^p} \quad \text{strongly in  $L^1(\mathcal{W})$},
\end{equation}
 and thus \eqref{f3} holds, since the sequence $\ds\left\lbrace \frac{\mathcal{T}_{n}(\widetilde{\varrho}(v))}{n}\right\rbrace$ converges to 0 weak-* in $L^\infty(\mathcal{W})$. As a conclusion the energy formula \eqref{energy formula} is proved.

\textbf{$\bullet$ Fourth step: The a.e. convergence  of the sequence $\nabla v_\varepsilon$}

  The main point of this step is  proving the a.e. convergence of $\nabla v_\varepsilon$ in $\mathcal{W}$ and this is will be done by using an arguments similar to these used in \cite{104}. Note that, here we use a slightly different techniques due to the existence of the convection term $-\operatorname{div}(\mathcal{B}(x,v))$ and the term of Hardy potential $\ds\gamma \frac{\vert v\vert^{s-1}v}{\vert x\vert^p}$ in our operator.
\begin{lemma}\label{t2}
Assuming $v_\varepsilon$ is a sequence of solutions to the problems \eqref{problemapr}
with $f^{\varepsilon}$ strongly converging to some $f$ in $L^{1}(\mathcal{W}).$ Suppose that:

(i)  $\mathcal{T}_{k}\left(v_{\varepsilon}\right)$ belongs to $W_{0}^{1, p}(\mathcal{W})$ for every $k>0$,

(ii) $v_\varepsilon$ converges almost everywhere in $\mathcal{W}$ to some measurable function $v$ which is finite almost everywhere and $\mathcal{T}_{k}(v)$ belongs to $W_0^{1,p}(\mathcal{W})$ for every $k > 0$,

(iii) $\vert v_\varepsilon\vert^{(1-\theta)(p-1)}$ is bounded in $L^{\frac{N}{N-p},\infty}(\mathcal{W})$, and $\vert v\vert^{(1-\theta)(p-1)}$ belongs to $L^{\frac{N}{N-p},\infty}(\mathcal{W})$, and

(iv) $\left\vert \nabla v_{\varepsilon}\right\vert ^{{(1-\theta)(p-1)}}$ is bounded in $L^{\frac{N}{N-1-\theta(p-1)},\infty}(\mathcal{W})$, and $\vert \nabla v\vert ^{(1-\theta)(p-1)}$ belongs to $L^{\frac{N}{N-1-\theta(p-1)},\infty}(\mathcal{W})$.

Then, up to a subsequence, $\nabla v_{\varepsilon}$ converges almost everywhere in $\mathcal{W}$ to $\nabla v$, the weak gradient of $v$.
\end{lemma}
\begin{proof}
 Let $\sigma>1$ and $\tau>1$ such that 
 \begin{equation}\label{m1}
 0<\sigma \tau <\frac{N(1-\theta)(p-1)}{p(N-1-\theta(p-1))}.
 \end{equation}
    Let us consider for any $0<j<k$ the sets
 \begin{equation*}
 C_{k}=\{x \in \mathcal{W}:\vert v(x)\vert  \leq k\}, \quad D_{\varepsilon, k,j}=\{x \in \mathcal{W}:\vert v_\varepsilon(x)-\mathcal{T}_{k}(v(x))\vert \leq j\}.
 \end{equation*}
 And we define
\begin{align*}
I(\varepsilon)&=\ds \int_{\mathcal{W}}\left\{\left[b_\varepsilon\left(x, v_{\varepsilon}, \nabla v_{\varepsilon}\right)-b_\varepsilon(x,v_\varepsilon,\nabla v)\right] \nabla\left(v_{\varepsilon}-v\right)\right\}^{\sigma },\\
&=\int_{C_{k}^c}\left\{\left[b_\varepsilon\left(x, v_{\varepsilon}, \nabla v_{\varepsilon}\right)-b_\varepsilon(x,v_\varepsilon,\nabla v)\right] \nabla\left(v_{\varepsilon}-v\right)\right\}^{\sigma }\\&+
\int_{C_{k}}\left\{\left[b_\varepsilon\left(x, v_{\varepsilon}, \nabla v_{\varepsilon}\right)-b_\varepsilon(x,v_\varepsilon,\nabla v)\right] \nabla\left(v_{\varepsilon}-v\right)\right\}^{\sigma },\\&
=I_1(\varepsilon,k)+I_2(\varepsilon,k).
\end{align*}
 By Hölder's inequality and  the growth condition $\eqref{b4}$ we get
\begin{equation*}
I_1(\varepsilon,k)\leq \ds C_8 \left(\int_{C_{k}^c} 1+ \vert \nabla v_{\varepsilon} \vert^{\sigma p\tau} +\vert \nabla v\vert ^{\sigma p\tau}\;dx\right)^{\frac{1}{\tau}} meas\lbrace C_{k}^c \rbrace^{1-\frac{1}{\tau}},
\end{equation*}
 We now choose $\sigma $ and $r$ such that \eqref{m1} is hold, therefore putting together $(iv)$ and  the inclusion \eqref{ceb} (with $r=\frac{N(1-\theta)(p-1)}{N-1-\theta(p-1)}$ and $q=\sigma p \tau$), we deduce that
 \begin{equation*}
I_1(\varepsilon,k)\leq \ds C_8  meas\lbrace C_{k}^c \rbrace^{1-\frac{1}{\tau}}.
\end{equation*} 
By (iii), and by the choice of $\sigma$ , we thus have
\begin{equation}\label{cvv1}
\lim _{k \rightarrow+\infty} \limsup _{\varepsilon \rightarrow 0} I_1(\varepsilon, k)=0 \text {. }
\end{equation}
For $j$ fixed, we have 
\begin{align*}
I_2(\varepsilon,k)&\leq \ds \int_{\mathcal{W}}\left\{\left[b_\varepsilon\left(x, v_{\varepsilon}, \nabla v_{\varepsilon}\right)-b_\varepsilon(x,v_\varepsilon,\nabla \mathcal{T}_{k}( v))\right] \nabla\left(v_{\varepsilon}-\mathcal{T}_{k}(v)\right)\right\}^{\sigma },\\&\leq \ds \int_{D_{\varepsilon,k,j}^c}\left\{\left[b_\varepsilon\left(x, v_{\varepsilon}, \nabla v_{\varepsilon}\right)-b_\varepsilon(x,v_\varepsilon,\nabla \mathcal{T}_{k}( v))\right] \nabla\left(v_{\varepsilon}-\mathcal{T}_{k}(v)\right)\right\}^{\sigma }
\\&+\ds \int_{D_{\varepsilon,k,j}}\left\{\left[b_\varepsilon\left(x, v_{\varepsilon}, \nabla v_{\varepsilon}\right)-b_\varepsilon(x,v_\varepsilon,\nabla \mathcal{T}_{k}( v))\right] \nabla\left(v_{\varepsilon}-\mathcal{T}_{k}(v)\right)\right\}^{\sigma }\\&+
I_3(\varepsilon,k)+I_4(\varepsilon,k),
\end{align*} 
thanks to (iii), we can affirm that
\begin{align*}
\lim _{k \rightarrow+\infty}\limsup_{\varepsilon \rightarrow 0} meas\lbrace D_{\varepsilon,k,j}^c \rbrace\leq \lim _{k \rightarrow+\infty} meas\lbrace\vert v-\mathcal{T}_{k}(v)\vert> j \rbrace=0 ,
\end{align*}
thus, reasoning as for $I_1(\varepsilon,k)$, one has
\begin{equation}\label{cvv2}
\lim _{k \rightarrow+\infty} \limsup _{\varepsilon \rightarrow 0} I_3(\varepsilon, k)=0 \text {. }
\end{equation}

For $I_4(\varepsilon,k)$, one can rewrite it as
\begin{align*}
I_4(\varepsilon,k)=\ds \int_{\mathcal{W}}\left\{\left[b_\varepsilon\left(x, v_{\varepsilon}, \nabla v_{\varepsilon}\right)-b_\varepsilon(x,v_\varepsilon,\nabla \mathcal{T}_{k}( v))\right] \nabla T_j\left(v_{\varepsilon}-\mathcal{T}_{k}(v)\right)\right\}^{\sigma }.
\end{align*}
By the Hölder's inequality (with exponents $\frac{1}{\sigma}$ and $\frac{1}{1-\sigma}$), we have
\begin{align*}
I_4(\varepsilon,k)&\leq \left(\int_\mathcal{W}\left[b_\varepsilon\left(x, v_{\varepsilon}, \nabla v_{\varepsilon}\right)-b_\varepsilon(x,v_\varepsilon,\nabla \mathcal{T}_{k}(v))\right] \nabla T_j\left(v_{\varepsilon}-\mathcal{T}_{k}(v)\right)\;dx\right)^{\sigma } meas(\mathcal{W})^{1-\sigma }.
\end{align*}
 To control the integral on the right-hand side of the previous inequality, we employ $T_j(v_\varepsilon - \mathcal{T}_{k}(v))$ as the test function in \eqref{vpap}. This leads to
 \begin{align*}
&\displaystyle \int_\mathcal{W} b_\varepsilon(x,v_\varepsilon,\nabla v_\varepsilon) \nabla T_j(v_\varepsilon-\mathcal{T}_{k}(v))\;dx+\displaystyle \int_\mathcal{W} \mathcal{B}_\varepsilon(x,v_\varepsilon) \nabla T_j(v_\varepsilon-\mathcal{T}_{k}(v))\;dx\\&= \int_{\mathcal{W}} f^\varepsilon  T_j(v_\varepsilon-\mathcal{T}_{k}(v))\;dx
+\gamma \displaystyle\int_{\mathcal{W}} \frac{\vert \mathcal{T}_{\frac{1}{\varepsilon}}(v)\vert ^{s-1}\mathcal{T}_{\frac{1}{\varepsilon}}(v)}{\vert x\vert ^p+\varepsilon}T_j(v_\varepsilon-\mathcal{T}_{k}(v))\;dx. 
\end{align*}
After some simple calculations, we derive that
\small{
\begin{equation*}
\begin{aligned}
&\ds \int_\mathcal{W} \left(b_\varepsilon(x,v_\varepsilon,\nabla v_\varepsilon)-b_\varepsilon(x,v_\varepsilon,\nabla \mathcal{T}_{k}(v))\right) \nabla T_j(\mathcal{T}_{k}(v_\varepsilon)-\mathcal{T}_{k}(v))\;dx\\&\leq j\gamma \displaystyle\int_{\mathcal{W}} \frac{\vert v_\varepsilon\vert ^{s}}{\vert x\vert ^p}\;dx+j\int_{\mathcal{W}} \vert f^{\varepsilon}\vert\;dx - \int_{\mathcal{W}}\mathcal{B}_\varepsilon(x,v_\varepsilon)\nabla T_j(v_\varepsilon-\mathcal{T}_{k}(v))\;dx\\&-
\int_\mathcal{W} a_\varepsilon(x,v_\varepsilon,\nabla \mathcal{T}_{k}(v))\nabla T_j(\mathcal{T}_{k}(v_\varepsilon)-\mathcal{T}_{k}(v))\;dx.
\end{aligned}
\end{equation*}}
Let us pass to the limit, as $j$ and $\varepsilon$ tend to 0, in all the term of the right hand side of the above inequality.  First of all, thanks to the properties of $f^\varepsilon$  we have
\begin{equation*}
\lim _{j \rightarrow 0}\lim _{\varepsilon \rightarrow 0} j\int_{\mathcal{W}} \vert f^{\varepsilon}\vert\;dx=\lim _{j \rightarrow 0}j\int_{\mathcal{W}} \vert f \vert \;dx=0.
\end{equation*}
From \eqref{h1}, we conclude that
\begin{equation*}
\lim _{j \rightarrow 0}\lim _{\varepsilon \rightarrow 0} j\gamma \displaystyle\int_{\mathcal{W}} \frac{\vert v_\varepsilon\vert ^{s}}{\vert x\vert ^p}\;dx=\lim _{j \rightarrow 0} j\gamma \displaystyle\int_{\mathcal{W}} \frac{\vert v\vert ^{s}}{\vert x\vert ^p}\;dx=0.
\end{equation*}
Noting that
\begin{align*}
 \int_{\mathcal{W}}\mathcal{B}_\varepsilon(x,v_\varepsilon)\nabla T_j(v_\varepsilon-\mathcal{T}_{k}(v))\;dx=\int_{\mathcal{W}}\mathcal{B}_\varepsilon(x,T_{j+k}(v_\varepsilon))\nabla T_j(T_{j+k}(v_\varepsilon)-\mathcal{T}_{k}(v))\;dx,
\end{align*}
therefore, by the growth condition \eqref{b7}, we obtain that
\begin{equation*}
\vert \mathcal{B}_\varepsilon(x,T_{j+k}(v_\varepsilon)) \vert\leq (j+k)^\lambda c_0(x),
\end{equation*}
moreover, by Lebesgue's convergence theorem, we get
\begin{equation*}
\mathcal{B}_\varepsilon(x,T_{j+k}(v_\varepsilon))\rightarrow \mathcal{B}(x,T_{j+k}(v)) \quad \text{strongly in}\quad (L^{p^\prime}(\mathcal{W}))^N,
\end{equation*}
thus, for $\ds \frac{1}{\varepsilon}>j+k$, we have
\begin{equation*}
\lim _{\varepsilon \rightarrow 0} \int_{\mathcal{W}}\mathcal{B}_\varepsilon(x,v_\varepsilon)\nabla T_j(v_\varepsilon-\mathcal{T}_{k}(v))\;dx=\int_{\mathcal{W}}\mathcal{B}(x,T_{j+k}(v))\nabla T_j(T_{j+k}(v)-\mathcal{T}_{k}(v))\;dx,
\end{equation*}
since, for any $j\leq 1$,  we have
\begin{equation*}
\begin{cases}
\nabla T_j(T_{k+j}(v)-\mathcal{T}_{k}(v))\rightarrow 0\quad \text{a.e. as } \;j\rightarrow 0\\
\vert \nabla T_j(T_{k+j}(v)-\mathcal{T}_{k}(v))\vert\leq \vert\nabla T_1(T_{k+1}(v)-\mathcal{T}_{k}(v))\vert\in L^p(\mathcal{W})
\end{cases}
\end{equation*}
then, by Lebesgue's convergence theorem it follows that
\begin{equation*}
\lim _{j \rightarrow 0}\lim _{\varepsilon \rightarrow 0} \int_{\mathcal{W}}\mathcal{B}_\varepsilon(x,v_\varepsilon)\nabla T_j(v_\varepsilon-\mathcal{T}_{k}(v))\;dx=0.
\end{equation*}
Using \eqref{b4} and \eqref{ucv}, we prove that
\begin{equation*}
b_\varepsilon(x,\mathcal{T}_{k}(v_\varepsilon),\nabla \mathcal{T}_{k}(v)) \rightarrow b(x,\mathcal{T}_{k}(v),\nabla \mathcal{T}_{k}(v))\quad \text{strongly in }\; (L^{p^\prime}(\mathcal{W}))^N,
\end{equation*}
putting together the last convergence, $(i)$ and $(ii)$ we arrive that
\begin{equation*}
\lim _{j \rightarrow 0}\lim _{\varepsilon \rightarrow 0} \int_\mathcal{W} b_\varepsilon(x,v_\varepsilon,\nabla \mathcal{T}_{k}(v))\nabla T_j(\mathcal{T}_{k}(v_\varepsilon)-\mathcal{T}_{k}(v))\;dx=0.
\end{equation*}
Hence, we obtain
\begin{equation}\label{cvv3}
\lim _{k \rightarrow+\infty} \limsup _{\varepsilon \rightarrow 0} I_4(\varepsilon, k)=0 \text {. }
\end{equation}
 Combining \eqref{cvv1}, \eqref{cvv2} and \eqref{cvv3}, we deduce
\begin{equation*}
\lim _{\varepsilon \rightarrow 0}I(\varepsilon) \leq 0.
\end{equation*}
Therefore, by \eqref{b5}, we deduce that
\begin{equation*}
\int_{\mathcal{W}}\left\{\left[b_\varepsilon\left(x, v_{\varepsilon}, \nabla v_{\varepsilon}\right)-b_\varepsilon(x,v_\varepsilon,\nabla v)\right] \nabla\left(v_{\varepsilon}-v\right)\right\}^{\sigma } \rightarrow 0
\end{equation*}
that is
%\begin{equation*}
%\Vert \left\{\left[b_\varepsilon\left(x, u_{\varepsilon}, \nabla v_{\varepsilon}\right)-a_\varepsilon(x,v_\varepsilon,\nabla v)\right] \nabla\left(u_{\varepsilon}-u\right)\right\}^{\sigma }\Vert _{L^{1}(\mathcal{W})} \rightarrow 0
%\end{equation*}
%which implies (for a suitable subsequence, still denoted by $u_{\varepsilon}$ )
%\begin{equation*}
%\left\{\left[b_\varepsilon\left(x, u_{\varepsilon}, \nabla v_{\varepsilon}\right)-a_\varepsilon(x,v_\varepsilon,\nabla v)\right] \nabla\left(u_{\varepsilon}-u\right)\right\}^{\sigma } \rightarrow 0 \text { almost everywhere, }
%\end{equation*}
%and also (since $\sigma $ is positive) we have
%\begin{equation*}
%\left\{\left[b_\varepsilon\left(x, u_{\varepsilon}, \nabla v_{\varepsilon}\right)-a_\varepsilon\left(x, u_{\varepsilon}, \nabla v\right)\right] \nabla\left(u_{\varepsilon}-u\right)\right\} \rightarrow 0 \quad \text { almost everywhere. }
%\end{equation*}
%Then, we get
\begin{equation}\label{cvpp}
\nabla v_{\varepsilon}(x) \rightarrow \nabla v(x) \quad\text{ a.e. in } \mathcal{W}.
\end{equation}
\end{proof}
\textbf{$\bullet$ Fifth Step : Passing to the limit}

In this step we prove that $v$ is a renormalized solutions of \eqref{problem}. Observe that, due to \eqref{aee}, \eqref{altcv} and Fatou Lemma, we  can easily get that
%\textcolor{red}{
\begin{align*}
meas \left\lbrace \vert v\vert > k_\eta \right\rbrace
&\leq\liminf_{\varepsilon \rightarrow 0}  meas \left\lbrace \vert v_\varepsilon\vert >k_\eta \right\rbrace\leq \ds\frac{1}{\eta^p},
\end{align*}
 thus, $v$ is  finite a.e. in $\mathcal{W}$ and  \eqref{p0} is proved. Moreover, thanks to \eqref{tkaltcv}, we infer that \eqref{p1} is hold.

Combining proposition \ref{pr1} and lemma \ref{t2}, we get
\begin{equation*}
b_\varepsilon(x,v_\varepsilon,\nabla v_\varepsilon)\rightarrow b(x,v,\nabla v)\quad \text{a.e. in $\mathcal{W},$}
\end{equation*} 
 and under the growth assumption \eqref{b4}, we deduce that
 \begin{equation*}
b_\varepsilon(x,v_\varepsilon,\nabla v_\varepsilon)\rightharpoonup b(x,v,\nabla v)\quad \text{weakly in $(L^{p^\prime}(\mathcal{W}))^N,$}
\end{equation*}
and for any $k>0,$ we have 
\begin{equation*}
b_\varepsilon(x,\mathcal{T}_{k}(v_\varepsilon),\nabla \mathcal{T}_{k}(v_\varepsilon))\rightharpoonup b(x,\mathcal{T}_{k}(v),\nabla \mathcal{T}_{k}(v))\quad \text{weakly in $(L^{p^\prime}(\mathcal{W}))^N.$}
\end{equation*}
Moreover, for any $k_n \in(0,\frac{1}{\varepsilon})$(as in the remark \ref{rmk2}), we can write 
\begin{align*}
 &\frac{1}{n}\displaystyle\int_{\lbrace \vert \widetilde{\varrho}(v_\varepsilon)\vert \leq n\rbrace}\varrho(v_\varepsilon) b_\varepsilon(x,v_\varepsilon,\nabla v_\varepsilon)\nabla v_\varepsilon\;dx\\&= \frac{1}{n}\displaystyle\int_{\mathcal{W}} b_\varepsilon(x,v_\varepsilon,\nabla v_\varepsilon)\nabla \mathcal{T}_{n}(\widetilde{\varrho}\left(v_{\varepsilon}\right))\;dx,\\
&=\frac{1}{n}\displaystyle\int_{\mathcal{W}} b_\varepsilon(x,T_{k_n}(v_\varepsilon),\nabla T_{k_n}(v_\varepsilon))\nabla \mathcal{T}_{n}(\widetilde{\varrho}\left(v_{\varepsilon}\right))\;dx,
\end{align*}
thus, from \eqref{alphauepsilon} and \eqref{energy formula}, it follows that
\begin{align*}
 \lim_{n \rightarrow +\infty}\frac{1}{n}\displaystyle\int_{\mathcal{W}} b(x,T_{k_n}(v),\nabla T_{k_n}(v))\nabla \mathcal{T}_{n}(\widetilde{\varrho}\left(v\right)\;dx=0,
\end{align*}
which gives \eqref{p2}.
%
%\textbf{M2:}
%
%Observing that, the theorem \ref{t1} with \eqref{ucv} allow us to conclude that
%\begin{equation*}
%\varrho\left(v_{\varepsilon}\right)b_\varepsilon(x,v_\varepsilon,\nabla v_\varepsilon)\nabla v_{\varepsilon} \rightarrow\varrho\left(u\right)b(x,v,\nabla v)\nabla v \quad\text{a.e. in $\mathcal{W}$} 
%\end{equation*}
%%Since, we have $\varrho\left(v_{\varepsilon}\right)b_\varepsilon(x,v_\varepsilon,\nabla v_\varepsilon)\nabla v_{\varepsilon}$ tends to $\varrho\left(u\right)b(x,v,\nabla v)\nabla v$ a.e. in $\mathcal{W}$, 
%then, in view of Fatou lemma and \eqref{energy formula}, we deduce that
%\begin{align*}
%\lim_{n \rightarrow +\infty}&\frac{1}{n}\displaystyle\int_{\lbrace \vert \widetilde{\varrho}(v_\varepsilon)\vert \leq n\rbrace} \varrho\left(u\right)b(x,v,\nabla v)\nabla v\;dx\\&\leq
%\lim_{n \rightarrow +\infty}\limsup_{\varepsilon \rightarrow 0}\frac{1}{n}\displaystyle\int_{\lbrace \vert \widetilde{\varrho}(v_\varepsilon)\vert \leq n\rbrace}\varrho(v_\varepsilon) b_\varepsilon(x,v_\varepsilon,\nabla v_\varepsilon)\nabla v_\varepsilon\;dx=0,
%\end{align*}
%hence \eqref{p2} is hold.

Given $k > 0$ and $\forall n\in \N^*$, denote by  $h_n,$ the truncation function defined as 
\begin{equation*}
h_n(t)=1- \frac{\vert \mathcal{T}_{2n}(t)-\mathcal{T}_{n}(t)\vert}{n}, \quad \forall t\in\R.
\end{equation*}
Now we claim that \eqref{vpap} holds true. Let $h \in W^{1, \infty}(\mathbb{R})$ such that $\operatorname{supp}(h)$ is compact and let $\varphi \in D(\mathcal{W})$. For any $n \in \N^* $ the function $h_{n}\left(\widetilde{\varrho}\left(v_{\varepsilon}\right)\right) h(\widetilde{\varrho}(v)) \varphi$ belongs to $W_0^{1,p}(\mathcal{W})\cap L^{\infty}(\mathcal{W}) $, and then it is an admissible test function in \eqref{vpap}. It yields that
\begin{equation}\label{vpp1}
\begin{aligned}
&\displaystyle\int_{\mathcal{W}} h_n^{\prime}(\widetilde{\varrho}\left(v_{\varepsilon}\right)) h(\widetilde{\varrho}(v)) \varphi \varrho\left(v_{\varepsilon}\right)b_\varepsilon(x,v_\varepsilon,\nabla v_\varepsilon)  \nabla v_\varepsilon \;dx
\\&
+\displaystyle\int_{\mathcal{W}} h_{n}\left(\widetilde{\varrho}\left(v_{\varepsilon}\right)\right)a_\varepsilon(x, v_\varepsilon, \nabla v_\varepsilon)  \nabla [ h(\widetilde{\varrho}(v)) \varphi]\;dx 
\\&
+\displaystyle\int_{\mathcal{W}} \mathcal{B}_\varepsilon(x, v_\varepsilon)\varrho\left(v_{\varepsilon}\right)\nabla v_\varepsilon  h_n^{\prime}(\widetilde{\varrho}\left(v_{\varepsilon}\right)) h(\widetilde{\varrho}(v)) \varphi \;dx
\\&
+\int_{\mathcal{W}} \mathcal{B}_\varepsilon(x, v_\varepsilon)  h_{n}\left(\widetilde{\varrho}\left(v_{\varepsilon}\right)\right)  \nabla [ h(\widetilde{\varrho}(v)) \varphi] \;dx
\\&
=\gamma \displaystyle\int_{\mathcal{W}} \frac{\left\vert \mathcal{T}_{\frac{1}{\varepsilon}}(v_\varepsilon)\right\vert^{s-1} \mathcal{T}_{\frac{1}{\varepsilon}}(v_\varepsilon)}{\vert x\vert^p+\varepsilon}h_{n}\left(\widetilde{\varrho}\left(v_{\varepsilon}\right)\right) h(\widetilde{\varrho}(v)) \varphi \;dx
\\&
+  \int_{\mathcal{W}} f^\varepsilon h_{n}\left(\widetilde{\varrho}\left(v_{\varepsilon}\right)\right) h(\widetilde{\varrho}(v)) \varphi\;dx
\end{aligned}
\end{equation}
Let us pass to limit in \eqref{vpp1} as $\varepsilon$ goes to zero and as $n$ goes to $+\infty$.

Recalling  the definition of function $h_n$, we have
\begin{align*}
&\displaystyle\int_{\mathcal{W}} h_n^{\prime}(\widetilde{\varrho}\left(v_{\varepsilon}\right)) h(\widetilde{\varrho}(v)) \varphi \varrho\left(v_{\varepsilon}\right)b_\varepsilon(x,v_\varepsilon,\nabla v_\varepsilon)  \nabla v_\varepsilon \;dx\\&=\frac{1}{n}\displaystyle\int_{\lbrace n<\vert \widetilde{\varrho}\left(v_{\varepsilon}\right)\vert<2n\rbrace} sign(\widetilde{\varrho}\left(v_{\varepsilon}\right)) h(\widetilde{\varrho}(v)) \varphi \varrho\left(v_{\varepsilon}\right)b_\varepsilon(x,v_\varepsilon,\nabla v_\varepsilon)  \nabla v_\varepsilon \;dx,\\&
\leq\frac{1}{n}\displaystyle\int_{\lbrace n<\vert \widetilde{\varrho}\left(v_{\varepsilon}\right)\vert<2n\rbrace}  h(\widetilde{\varrho}(v)) \varphi \varrho\left(v_{\varepsilon}\right)b_\varepsilon(x,v_\varepsilon,\nabla v_\varepsilon)  \nabla v_\varepsilon \;dx,
\end{align*}
the bounded character of the term $h(\widetilde{\varrho}(v)) \varphi$ and \eqref{energy formula}, allow us to conclude that
\begin{equation}\label{lim1}
 \lim_{n \rightarrow +\infty}\limsup_{\varepsilon \rightarrow 0} \displaystyle\int_{\mathcal{W}} h_n^{\prime}(\widetilde{\varrho}\left(v_{\varepsilon}\right)) h(\widetilde{\varrho}(v)) \varphi \varrho\left(v_{\varepsilon}\right)b_\varepsilon(x,v_\varepsilon,\nabla v_\varepsilon)  \nabla v_\varepsilon \;dx=0.
\end{equation}
Under the growth assumption \eqref{b4}  and according to the lemma \ref{t2}, we have
\begin{equation*}
h_{n}\left(\widetilde{\varrho}\left(v_{\varepsilon}\right)\right)b_\varepsilon(x, v_\varepsilon, \nabla v_\varepsilon)\rightharpoonup h_{n}\left(\widetilde{\varrho}\left(v\right)\right)b(x,v,\nabla v)\quad \text{ weakly in }\;(L^{p^\prime}(\mathcal{W}))^N,
\end{equation*}
In addition of the fact that the function $h(\widetilde{\varrho}(v)) \varphi$ is in $W^{1,p}_0(\mathcal{W})$, we get
\begin{align*}
 \limsup_{\varepsilon \rightarrow 0} &\displaystyle\int_{\mathcal{W}} h_{n}\left(\widetilde{\varrho}\left(v_{\varepsilon}\right)\right)b_\varepsilon(x, v_\varepsilon, \nabla v_\varepsilon)  \nabla [ h(\widetilde{\varrho}(v)) \varphi]\;dx \\&=\displaystyle\int_{\mathcal{W}} h_{n}\left(\widetilde{\varrho}\left(v\right)\right)b(x,v,\nabla v)  \nabla [ h(\widetilde{\varrho}(v)) \varphi]\;dx, 
\end{align*}
moreover, we have that
\begin{equation*}
\begin{aligned}
&h_{n}\left(\widetilde{\varrho}\left(v\right)\right)  \rightarrow 1\quad\text{a.e. in}\;\mathcal{W},\\
&b(x,v,\nabla v)  \nabla [ h(\widetilde{\varrho}(v)) \varphi] \in L^1(\mathcal{W}),\\
&\nabla [ h(\widetilde{\varrho}(v)) \varphi]=\varphi \varrho(v)h^\prime(\widetilde{\varrho}(v)) \nabla v+ h(\widetilde{\varrho}(v))\nabla \varphi,
\end{aligned}
\end{equation*}
so that, Lebesgue's convergence theorem  allows us to derive that
\begin{equation}\label{lim2}
\begin{aligned}
 \lim_{n \rightarrow +\infty}\limsup_{\varepsilon \rightarrow 0} &\displaystyle\int_{\mathcal{W}} h_{n}\left(\widetilde{\varrho}\left(v_{\varepsilon}\right)\right)b_\varepsilon(x, v_\varepsilon, \nabla v_\varepsilon)  \nabla [ h(\widetilde{\varrho}(v)) \varphi]\;dx \\&=\displaystyle\int_{\mathcal{W}} b(x,v,\nabla v)  \varphi \varrho(v)h^\prime(\widetilde{\varrho}(v)) \nabla v\;dx\\&+\int_{\mathcal{W}} b(x,v,\nabla v)  h(\widetilde{\varrho}(v))\nabla \varphi\;dx,\\
 \end{aligned}
\end{equation}
 by means of the definition of $h_n$, the growth assumption \eqref{b7}, Hölder's, Sobolev's inequalities and bounded character of $h(\widetilde{\varrho}(v)) \varphi$, we deduce that
 \begin{equation*}
 \begin{aligned}
 &\displaystyle\int_{\mathcal{W}} \mathcal{B}_\varepsilon(x, v_\varepsilon)\varrho\left(v_{\varepsilon}\right)\nabla v_\varepsilon  h_n^{\prime}(\widetilde{\varrho}\left(v_{\varepsilon}\right)) h(\widetilde{\varrho}(v)) \varphi \;dx\\&
 \leq \frac{1}{n}\Vert h\Vert_{L^\infty(\R)}\Vert \varphi\Vert_{L^\infty(\mathcal{W})}\int_{\lbrace n<\vert \widetilde{\varrho}\left(v_{\varepsilon}\right)\vert<2n\rbrace} \vert\mathcal{B}_\varepsilon(x, v_\varepsilon)\vert \vert\varrho\left(v_{\varepsilon}\right)\vert \vert\nabla v_\varepsilon\vert\;dx,\\
 &\leq \frac{C_{\widetilde{\varrho}}}{n}\Vert h\Vert_{L^\infty(\R)}\Vert \varphi\Vert_{L^\infty(\mathcal{W})} \Vert c_0(x)\Vert_{L^{p^\prime}(\mathcal{W})}\Vert\nabla \mathcal{T}_{2n}(\widetilde{\varrho}( v_\varepsilon))\Vert_{L^{p}(\mathcal{W})}\\&+\frac{C_{\widetilde{\varrho}}\mathcal{S}^{p-1}}{n}\Vert h\Vert_{L^\infty(\R)}\Vert \varphi\Vert_{L^\infty(\mathcal{W})}\Vert c_0(x)\Vert_{L^{\frac{N}{p-1}}(\mathcal{W})}\left\Vert\nabla \mathcal{T}_{2n}(\widetilde{\varrho}( v_\varepsilon))\right\Vert_{L^{p}(\mathcal{W})}^{p},
 \end{aligned}
 \end{equation*}
 where $C_{\widetilde{\varrho}}$ is a positive constant which does not depends on $\varepsilon$.
 
 From the energy formula \eqref{energy formula}, we prove that
 \begin{align}\label{lim3}
 \lim_{n \rightarrow +\infty}\limsup_{\varepsilon \rightarrow 0} \displaystyle\int_{\mathcal{W}} \mathcal{B}_\varepsilon(x, v_\varepsilon)\varrho\left(v_{\varepsilon}\right)\nabla v_\varepsilon  h_n^{\prime}(\widetilde{\varrho}\left(v_{\varepsilon}\right)) h(\widetilde{\varrho}(v)) \varphi \;dx=0.
 \end{align}
 Since $supp(h_n)=[-2n,2n]$, then there exists $k_{2n} \in (0,\frac{1}{\varepsilon})$ such that 
 \small{
 \begin{equation*}
 \int_{\mathcal{W}} \mathcal{B}_\varepsilon(x, v_\varepsilon) h_{n}\left(\widetilde{\varrho}\left(v_{\varepsilon}\right)\right)  \nabla [ h(\widetilde{\varrho}(v)) \varphi]dx=\int_{\mathcal{W}} \mathcal{B}_\varepsilon(x, \mathcal{T}_{k_{2n}}( v_\varepsilon))  h_{n}\left(\widetilde{\varrho}\left(v_{\varepsilon}\right)\right)  \nabla [ h(\widetilde{\varrho}(v)) \varphi]dx.
\end{equation*}}
As a consequence of $\eqref{ae}$, $\eqref{ucv}$ and $\eqref{altcv}$   we obtain that
\begin{equation*}
\mathcal{B}_\varepsilon(x,\mathcal{T}_{k_{2n}}( v_\varepsilon)) h_n(\widetilde{\varrho}\left(v_{\varepsilon}\right))\rightarrow \mathcal{B}(x,v) h_n(\widetilde{\varrho}\left(v\right))\quad \text{a.e. in $\mathcal{W}$},
\end{equation*}  
and by the growth condition $\eqref{b7}$ we deduce that 
\begin{equation*}
\big\vert\mathcal{B}_\varepsilon(x,\mathcal{T}_{k_{2n}}( v_\varepsilon)) h_n(\widetilde{\varrho}\left(v_{\varepsilon}\right)) \big\vert\leq (k_{2n})^\lambda c_0(x) \Vert h_n \Vert_{L^\infty(\R)}\in L^{p^\prime}(\mathcal{W}),
\end{equation*}
therefore, Lebesgue's convergence theorem allow us to conclude that
\begin{equation*}
\mathcal{B}_\varepsilon(x, \mathcal{T}_{k_{2n}}(v_\varepsilon)) h_n(\widetilde{\varrho}\left(v_{\varepsilon}\right))\rightarrow \mathcal{B}(x,v) h_n(\widetilde{\varrho}\left(v\right))\quad \text{strongly in $L^{p^\prime}(\mathcal{W})$},
\end{equation*}
hence for $n$ large enough such that $k_{2n}\geq supp(h)$, we have
\begin{equation}\label{lim4}
\begin{aligned}
 \lim_{n \rightarrow +\infty}\limsup_{\varepsilon \rightarrow 0} &\int_{\mathcal{W}} \mathcal{B}_\varepsilon(x, \mathcal{T}_{k_{2n}}( v_\varepsilon))  h_{n}\left(\widetilde{\varrho}\left(v_{\varepsilon}\right)\right)  \nabla [ h(\widetilde{\varrho}(v)) \varphi] \;dx\\&=\lim_{n \rightarrow +\infty}\int_{\mathcal{W}} \mathcal{B}(x, \mathcal{T}_{k_{2n}}(v))  h_{n}\left(\widetilde{\varrho}\left(v\right)\right)  \nabla [ h(\widetilde{\varrho}(v)) \varphi] \;dx,\\&
 =\int_{\mathcal{W}} \mathcal{B}(x,v)h(\widetilde{\varrho}(v))\nabla\varphi\;dx+\int_{\mathcal{W}}\mathcal{B}(x,v)\varphi \varrho(v)h^\prime(\widetilde{\varrho}(v)) \nabla v\,dx.
 \end{aligned}
\end{equation} 
By combining $\eqref{bn}$, $\eqref{ucv},$  $\eqref{altcv}$ and the fact that $h_n$ is bounded by 1, we can apply Lebesgue convergence theorem to establish that
\small{
\begin{equation}\label{lim5}
\begin{aligned}
 \lim_{n \rightarrow +\infty}\limsup_{\varepsilon \rightarrow 0}& \gamma \displaystyle\int_{\mathcal{W}} \frac{\left\vert \mathcal{T}_{\frac{1}{\varepsilon}}(v_\varepsilon)\right\vert^{s-1} \mathcal{T}_{\frac{1}{\varepsilon}}(v_\varepsilon)}{\vert x\vert^p+\varepsilon}h_{n}\left(\widetilde{\varrho}\left(v_{\varepsilon}\right)\right) h(\widetilde{\varrho}(v)) \varphi \;dx\\&=\gamma \displaystyle\int_{\mathcal{W}} \frac{\vert v\vert^{s-1} v}{\vert x\vert^p} h(\widetilde{\varrho}(v)) \varphi \;dx.
\end{aligned}
\end{equation}}
At last $\eqref{cvdf}$, $\eqref{altcv}$ and the behavior of the sequence $h_n$ together with Lebesgue convergence theorem lead to
\begin{equation}\label{lim6}
\lim_{n \rightarrow +\infty}\limsup_{\varepsilon \rightarrow 0}\int_{\mathcal{W}} f^\varepsilon h_{n}\left(\widetilde{\varrho}\left(v_{\varepsilon}\right)\right) h(\widetilde{\varrho}(v)) \varphi\;dx=\int_{\mathcal{W}} f  h(\widetilde{\varrho}(v)) \varphi\;dx
\end{equation}
Finally, thanks to \eqref{lim1}-\eqref{lim6}, we deduce that for any $h\in W^{1,\infty}(\R)$ with compact support in $\R$, we have
\begin{equation*}
\begin{array}{l}
\displaystyle\int_{\mathcal{W}} b(x,v,\nabla v) \varrho(v)\nabla v h^{\prime}(\widetilde{\varrho}(v)) v\;dx+\displaystyle\int_{\mathcal{W}} b(x,v,\nabla v)  \nabla v h(\widetilde{\varrho}(v))\;dx \\
\quad+\displaystyle\int_{\mathcal{W}} \mathcal{B}(x,v) \varrho(v) \nabla v h^{\prime}(\widetilde{\varrho}(v)) v\;dx+\int_{\mathcal{W}} \mathcal{B}(x,v) \nabla v h(\widetilde{\varrho}(v)) \;dx\\
\quad
=\gamma \displaystyle\int_{\mathcal{W}} \frac{\vert u\vert ^{s-1}u}{\vert x\vert ^p}h(\widetilde{\varrho}(v))v \;dx+  \int_{\mathcal{W}} f h(\widetilde{\varrho}(v)) v \;dx\quad
\end{array}
\end{equation*}
for every  $v \in W_0^{1,p}(\mathcal{W}) \cap L^{\infty}(\mathcal{W})$.

At least the limit $v$ satisfies \eqref{p0}, \eqref{p1}, \eqref{p2} and \eqref{vp}, which asserts that $v$ is a renormalized solution of the problem \ref{problem}, then  the proof of Theorem \ref{t1} is now complete.
\end{proof}
%\section*{Author contributions}
% The authors contributed equally to this work.
%\section*{Funding}
% Funding information is not applicable/no funding was received.
\section*{Acknowledgement(s)}
This work has been done while the first author was  PhD  visiting student at the Department of Mathematics and Computer Science, University of Catania, supported by a  MAECI scholarship of the Italian Government.
 \section*{Declarations}
\textbf{Conflict of interest} The author declare that he has no conflict of interest.

%\section*{Declarations} 
%\textbf{Conflict of interest} The authors declare no conflict of interest.

% ------------------------------------------------------------------------

\begin{thebibliography}{99}
 \addcontentsline{toc}{chapter}{Bibliographie} 
\bibitem{20}\label{20}
	  Abdellaoui, B., Peral, I.,  Primo, A.: Breaking of resonance and regularizing effect of a first order quasi-linear term in some elliptic equations. Annales de l’Institut Henri Poincaré C, Analyse Non Linéaire, 25(5), (2008), 969–985  
\bibitem{21}\label{21}
 Abdellaoui, B., Peral, I.,  Primo, A.: Elliptic problems with a Hardy potential and critical growth in the gradient: Non-resonance and blow-up results. Journal of Differential Equations, 239(2), (2007), 386–416 

\bibitem{ABR2023}\label{ABR2025}
Achhoud, F., Bouajaja, A.,  Redwan, H.: Existence Results for Renormalized Solutions to Non-Coercive Nonlinear Elliptic Equations Involving a Hardy Potential and With $L^1$-Data. Journal of Nonlinear Evolution Equations and Applications. (To appear) 
 
\bibitem{ABR2023}\label{ABR2023}
Achhoud, F., Bouajaja, A.,  Redwan, H.: Nonlinear elliptic boundary value problems with convection term and Hardy potential. Moroccan Journal of Pure and Applied Analysis. 9(3), (2023), 383-406  
 
 \bibitem{AR}\label{AR}
Achhoud, F.,  Rita Cirmi, G.: Regularity results for an anisotropic nonlinear Dirichlet problem. Complex Variables and Elliptic Equations, (2024), 1-22 

\bibitem{AR}\label{AS}
ACHHOUD, F., S. D'ASERO.: $W_0^{ 1, 1}(\Omega)$-SOLUTIONS FOR A DEGENERATE DOUBLE PHASE TYPE OPERATOR IN SOME BORDERLINE CASES. Le Matematiche 79.2 (2024), 621-639
 
  \bibitem{ABR2024}\label{ABR2024}
 Achhoud, F., Bouajaja, A.,  Redwane, H.: Degenerate nonlinear elliptic equations with Hardy potential and nonlinear convection term. SeMA Journal (2024) 
 	 
	 \bibitem{104}\label{104}
	 Alvino, A., Boccardo, L., Ferone, V., Orsina, L.,  Trombetti, G.: Existence results for nonlinear elliptic equations with degenerate coercivity. Annali Di Matematica Pura Ed Applicata, 182(1), (2003), 53–79  
\bibitem{88}\label{88}
 Avgoustiniatos, E. S., Colton, C. K.: Effect of External Oxygen Mass Transfer Resistances on Viability of Immunoisolated Tissuea. Annals of the New York Academy of Sciences, 831(1), (2006), 145–166      
\bibitem{17}\label{17}
	 Ben Cheikh Ali, M., Guibé, O.: Nonlinear and Noncoercive elliptic problems with integrable
data. Adv. Math. Sci. Appl. 16(1), (2006), 275–297 
\bibitem{38}\label{38}
 Bénilan, P., Boccardo, L., Gallouët, T., Gariepy, R., Pierre, M., Vazquez, J.L.: An $L^1$-theory of existence and uniqueness of solutions of nonlinear elliptic equations. Annali della Scuola Normale Superiore di Pisa - Classe di Scienze, 22.2, (1995), 241-273   
\bibitem{23}\label{23}
 Betta, M. F., Guibé, O.,  Mercaldo, A.: Neumann problems for nonlinear elliptic equations with L1 data. Journal of Differential Equations, 259(3), (2015), 898–924    
\bibitem{BMP}	\label{BMP}
Betta, M. F., Mercaldo, A., Murat, F., \& Porzio, M. M.: Existence of renormalized solutions to nonlinear elliptic equations with lower-order terms and right-hand side measure. Journal de Mathématiques Pures et Appliquées, 81(6), (2002), 533–566   
 \bibitem{BG}\label{BG}
Blanchard, D., Desir, F.,  Guibe, O.: Quasi-linear degenerate elliptic problems with $L^1$ data. Nonlinear Analysis, 60(3), (2005), 557–587  
\bibitem{33}\label{33}
 Boccardo, L.:
Some Developments on Dirichlet Problems with Discontinuous Coefficients. Bollettino dell'Unione Matematica Italiana. Serie 9, Vol. 2, n.1, p. (2009), 285–297 
\bibitem{32}\label{32}
 Boccardo, L., Orsina, L.,  Peral, I.: A remark on existence and optimal summability of solutions of elliptic problems involving Hardy potential. Discrete and Continuous Dynamical Systems - A, 16(3), (2006), 513–523  
\bibitem{B}\label{B}
 Boccardo, L., Dall'Aglio, A.,  Orsina, L.: Existence and regularity results for some elliptic
equations with degenerate coercivity. Atti Semin. Mat. Fis. Univ. Modena 46, (1998), 51–81 
\bibitem{28}\label{28}
 Boccardo, L., Giachetti, D., Diaz, J. I.,  Murat, F.: Existence and Regularity of renormalized solutions for Some Elliptic Problems Involving Derivatives of Nonlinear Terms. Journal of Differential Equations, 106(2), (1993), 215–237  
\bibitem{29}\label{29}
Chong, K.L., Rice, N.M.: Equimeasurable rearrangements of functions. Queen's University, 1971
\bibitem{D} \label{D}
 Della Pietra, F.: Existence results for non-uniformly elliptic equations with general growth in the gradient. Differ. Integral Equ. 21 , (2008), 821–836  
\bibitem{40}\label{40}
Garc\'{i}a Azorero, J. P.,  Peral Alonso, I.: Hardy Inequalities and Some Critical Elliptic and Parabolic Problems. Journal of Differential Equations, 144(2)  (1998)
\bibitem{16} \label{16}
	 Guibé, O.,  Mercaldo, A.: Existence and Stability Results for renormalized solutions to Noncoercive Nonlinear Elliptic Equations with Measure Data. Potential Analysis, 25(3), (2006), 223–258  
\bibitem{2}\label{2}
	 Lions, J.-L.: Quelques Méthodes de Résolution des Problèmes aux Limites Non Linéaires, Dunod et
Gauthier–Villars, Paris,  1969
\bibitem{MP} \label{MP}
Maderna, C., Pagani, C. D.,  Salsa, S.: Quasilinear elliptic equations with quadratic growth in the gradient. Journal of Differential Equations, 97(1), (1992), 54–70  
\bibitem{22}\label{22}
 Mercaldo, A., Peral, I.,  Primo, A.: Results for degenerate nonlinear elliptic equations involving a Hardy potential. Journal of Differential Equations, 251(11), (2011), 3114–3142  
 \bibitem{M}\label{M}
 Murat, F.: Soluciones renormalizadas de EDP elipticas non lineales. Technical Report
R93023, Laboratoire d'Analyse Numérique, Paris,  (1993)	[in Spanish].
	\bibitem{36}\label{36}
 Peral Alonso, I., Soria de Diego, F.: Elliptic and Parabolic Equations Involving the Hardy-Leray Potential,  2021
\bibitem{35}\label{35}
 Porzio, M. M.: On some quasilinear elliptic equations involving Hardy potential, Rendi-
conti di Matematica, Serie VII, 27, (2007), 277–297  
\bibitem{T}	\label{T}
Trombetti, C.: Non-uniformly elliptic equations with natural growth in the gradient. Potential Analysis, 18(4), (2003), 391–404  
\bibitem{Z}	\label{Z}
Zou, W.: Existence of solutions for a class of porous medium type equations with lower order terms. Journal of Inequalities and Applications, 2015
\end{thebibliography}
\end{document}